\documentclass[3p,twocolumn]{elsarticle}

\usepackage{amsmath,amsthm,amsfonts,amssymb}
\usepackage{mathtools}
\allowdisplaybreaks
\usepackage{graphicx}
\usepackage{booktabs}
\usepackage{xcolor}
\usepackage{enumerate}
\usepackage{microtype}
\usepackage{tikz}
\usetikzlibrary{arrows.meta, positioning, backgrounds, fit, calc}
\definecolor{mDarkTeal}{RGB}{48,115,116}
\definecolor{RoyalPurp}{RGB}{120,81,169}
\definecolor{BoxFillColor}{RGB}{240,240,240}
\newcommand{\He}{\mathrm{He}}
\newcommand{\diag}{\mathrm{diag}}
\newcommand{\R}{\mathbb{R}}
\DeclareMathOperator{\sign}{sign}
\newcommand{\spow}[1]{\lceil#1\rfloor}  
\newtheorem{theorem}{Theorem}
\newtheorem{proposition}[theorem]{Proposition}

\newtheorem{corollary}[theorem]{Corollary}
\newtheorem{assumption}[theorem]{Assumption}
\newtheorem{remark}[theorem]{Remark}

\begin{document}

\begin{frontmatter}

\title{Observer design for Lur'e-type systems via injection of a reconstructed nonlinear output}

\author[inst1]{Adel Malik ANNABI}\ead{adel-malik.annabi@inria.fr}

\address[inst1]{INRIA, CNRS, Universit\'e Côte d'Azur, France}

\begin{keyword}
Observer design; Lur'e-type systems; LMI; Second-order sliding-mode observers;
Sector-bounded nonlinearities; Neural mass models
\end{keyword}

\begin{abstract}
Observer design for Lur'e-type systems
typically reduces to solving a linear matrix inequality (LMI). In certain
cases, the observer gain may grow unbounded with the coupling norm.
We propose to reconstruct key nonlinear terms via a family of parallel
second-order sliding-mode observers and to inject them into a Luenberger
observer as additional measurements.  This feeds a second correction channel that
attenuates the nonlinear coupling in the error dynamics.  The convergence of
the combined observer is guaranteed under a new LMI which contains the classical
one as a special case.  We exhibit parameter regimes where classical designs
require impractically large gains, while the proposed design maintains
moderate gains.
A stability analysis of the proposed observer characterizes
the trade-off between the two designs and identifies the noise regimes
in which the proposed design is preferable.
We illustrate the approach on a Wilson--Cowan network showing the trade-off between the nonlinear coupling norm, observer gain, and noise.
\end{abstract}

\end{frontmatter}

\section{Introduction}
\label{sec:intro}

Estimating the internal state of a dynamical system from partial measurements
is a fundamental problem in control theory.  A particularly structured and
widely studied class of nonlinear systems is that of Lur'e-type systems: a linear
block in feedback interconnection with a static, memoryless nonlinearity.
Lur'e-type models arise naturally in many application domains, including
neuroscience~\cite{Wilson1972,Amari1977,Jansen1995,breakspear2017dynamic}, where the state is
typically only partially measured and reconstructing the full activity
vector is essential for monitoring and closed-loop control.

For Lur'e-type systems, the standard approach to state estimation is the
Luenberger observer, which copies the plant dynamics and corrects the
state prediction with a term proportional to the output prediction error.
When the nonlinearity satisfies an incremental sector or slope-restricted
condition, the convergence of the estimation error can be certified
through a linear matrix inequality (LMI), solvable by semidefinite
programming~\cite{Boyd1994,Arcak2001,Fan2003,Zemouche2013,giaccagli2023lmi}.  This LMI-based design
has been successfully applied to neural mass
models~\cite{Chong2012,Liu2014} for example. 

A known difficulty is that the LMI becomes increasingly stringent when the
nonlinear term is large compared to the linear dynamics.  When the nonlinear
feedback dominates, the LMI may become infeasible under any reasonable bound on
the observer gain.  The solver is then forced either to return gains that
are too large to be deployed in practice, or to declare infeasibility
altogether.  In either case, no usable Luenberger observer is produced.

As this obstruction is fundamentally a lack of measurements, the core idea of this work is that if we had access to key nonlinear coupling terms, one could inject them in the Luenberger observer and relax the LMI accordingly.  We show that, for the considered class of Lur'e-type systems, these signals can be reconstructed from the output itself: each output channel yields a scalar triangular dynamics from which the coupling term is recovered in finite time by a family of second-order sliding-mode observers~\cite{Levant2003,Moreno2008,bernard2017observers}, producing an additional \emph{virtual measurement}.

With this virtual measurement, we augment the classical Luenberger
observer with a second correction channel that compares the predicted
nonlinear output against the reconstructed signal.  The convergence of the
resulting combined observer is guaranteed by a new LMI that contains the
classical one as a special case.  Because the second channel attenuates
the effective nonlinear coupling before it reaches the error dynamics, the
new LMI remains feasible and yields moderate gains in regimes where
the classical one either fails or demands gains orders of magnitude
larger. However, the approach introduces the cost of the reconstruction of the virtual
measurement, which requires a family of sliding-mode observers and contributes its own
sensitivity to measurement noise.  In this regard,
we provide a
stability analysis that quantifies the steady-state error under
bounded measurement noise for both the classical and the proposed
observer.  This analysis delineates the noise regimes in which each design
is preferable and shows that the trade-off, that is accepting the
sliding-mode observer's noise footprint in exchange for a drastically reduced
observer gain, is favourable across a broad range of operating
conditions.

The paper is organized as follows.  Section~\ref{sec:problem} states the
system class and the sector condition.  Section~\ref{sec:observer} presents
the combined observer and the main convergence theorem with its Lyapunov-based
proof.  Section~\ref{sec:comparison} establishes the feasibility inclusion
between the classical and proposed LMIs and exhibits a closed-form gain that
attenuates the coupling, together with the structural case where the coupling
is removed from the error dynamics altogether.  Section~\ref{sec:iss} derives
the steady-state error bound under measurement noise and model disturbance.  Section~\ref{sec:simulations} provides a numerical comparison on a
Wilson--Cowan network, illustrating the gain reduction and the
noise trade-off.  Section~\ref{sec:conclusion} concludes.

\section{Problem statement}
\label{sec:problem}

\paragraph{Notation.}
Throughout, $\|\cdot\|$ denotes the Euclidean norm for vectors and the
induced $2$-norm for matrices, and $\|x\|_P := \sqrt{x^\top P x}$ the
$P$-weighted norm for $P \succ 0$.  For a square matrix $X$,
$\He\{X\} := X + X^\top$, and $\diag(\cdot)$ builds the diagonal matrix
with the listed entries.

Consider the Lur'e-type system
\begin{equation}
  \label{eq:system}
  \begin{cases}
    \dot{x} = Ax + Wf(x) + Bu + p(t),\\
    y = Cx + \nu(t),
  \end{cases}
\end{equation}
where $x \in \R^n$ is the state, $u \in \R^{n_u}$ the input, $y \in \R^{n_y}$
the measured output, and $p(t)$, $\nu(t)$ are bounded disturbances with
$\|p(t)\|\leq\bar{p}$, $\|\nu(t)\|\leq\bar{\nu}$ uniformly in $t$.
The matrices $A \in \R^{n\times n}$, $W \in \R^{n\times
n}$, $B \in \R^{n\times n_u}$, $C \in \R^{n_y\times n}$ are known.

The nonlinearity $f:\R^n\to\R^{n}$ is a $C^1(\R^n,\R^n)$ and Lipschitz continuous function. 
When $u\equiv 0$ and $f(0)=0$, the weaker assumption that $f$ is
component-wise H\"older continuous with exponent $\alpha\ge 1/2$
suffices (Remark~\ref{rem:holder}).

It satisfies the following incremental sector condition: there exist diagonal matrices
$\Gamma = \diag(\gamma_1,\ldots,\gamma_n)\succeq 0$ and
$\Lambda = \diag(\lambda_1,\ldots,\lambda_n)\succ 0$ such that, for all
$x,\hat{x}\in\R^n$, setting $\delta = f(\hat{x})-f(x)$ and $e = \hat{x}-x$,
\begin{equation}
  \label{eq:sector}
  \sum_{i=1}^{n} \lambda_i\,\delta_i\bigl(\delta_i - \gamma_i\,e_i\bigr) \leq 0.
\end{equation}
Two classical conditions that guarantee \eqref{eq:sector}
\cite{Arcak2001,Fan2003} are recalled below.
\begin{enumerate}[(i)]
  \item \emph{Component-wise slope bound.}  If $f$ acts component-wise and
        each $f_i$ satisfies $0 \leq f_i'(\xi)\leq \gamma_i$ for all
        $\xi\in\R$~\cite{Fan2003}, then \eqref{eq:sector} holds for any
        diagonal $\Lambda\succ 0$.
  \item \emph{Global operator bound.}  If $f$ has a symmetric Jacobian with
        $0\preceq J_f(x)\preceq \gamma I$ for all $x$, the integral mean-value
        theorem gives $\delta = \bar{J}e$ with $0\preceq\bar{J}\preceq\gamma I$,
        so $\delta^\top\lambda(\delta-\gamma e)\leq 0$; \eqref{eq:sector}
        holds with $\Gamma=\gamma I$ and scalar $\Lambda=\lambda I$.
\end{enumerate}

We also consider the weaker case where only component-wise monotonicity
$\delta_i e_i \geq 0$ is known (no slope bound $\gamma_i$ is available);
the corresponding increasing-only LMI is derived in
Remark~\ref{rem:increasing}.

\begin{assumption}\label{ass:compact}
  The input $u$ belongs to $L^2([0,\infty),\R^{n_u})$. 
  Solutions of \eqref{eq:system} are bounded in a known compact set $\mathcal{K}
  \subset \R^n$ for all $u\in\mathcal{U}$.
\end{assumption}
This assumption is standard in the literature. If, for example, the matrix $A$ is Hurwitz and the nonlinearity $f$ and the control $u$ are uniformly bounded, then it holds.

\medskip
\noindent\textbf{Observer design problem.}
Construct an auxiliary system driven by $(y, u)$ whose state $\hat{x}$
satisfies $\|\hat{x}(t) - x(t)\| \to 0$ exponentially for all initial
conditions $\hat{x}(0)$ in a given set.

\section{Combined observer and convergence theorem}
\label{sec:observer}

The proposed observer uses both the measured output $y = Cx$ and the signal
$y_2 := CAx + CWf(x)$, which is reconstructed in finite time from $y$.  The full
system is
\begin{equation}
  \label{eq:observer}
  \begin{cases}
    \dot{\hat{z}}_{1,j} = \hat{z}_{2,j} + [CBu]_j
      + L_j k_1\spow{y_j - \hat{z}_{1,j}}^{1/2}, \\[3pt]
    \dot{\hat{z}}_{2,j} \in  L_j^2 k_2\,\sign(y_j - \hat{z}_{1,j}),
      \qquad j = 1,\ldots,n_y, \\[5pt]
    \begin{aligned}
      \dot{\hat{x}} &= A\hat{x} + Wf(\hat{\zeta})
        + K(y - C\hat{x}) \\
      &\quad - K'\bigl(CA\hat{x} + CWf(\hat{\zeta})
        - \hat{z}_2\bigr) + Bu,
    \end{aligned}
  \end{cases}
\end{equation}
where $\hat{\zeta} = \hat{x} + E(y - C\hat{x})$, $\spow{\cdot}^{1/2}
= |\cdot|^{1/2}\sign(\cdot)$, and $K, K' \in \R^{n\times n_y}$,
$E \in \R^{n\times n_y}$, $L_j > 0$, $k_1,k_2 > 0$ are design parameters
(the gains $k_1,k_2$ are standard for the homogeneous
observer; see~\cite{Levant2003}).
The matrix $E$ shifts the argument of~$f$ inside the sector, reducing
conservatism~\cite{Fan2003}.

The upper block is a family of $n_y$ parallel second-order sliding-mode observers (solutions understood
in the Filippov sense~\cite{Levant2003}) that reconstruct $y_2 = CAx + CWf(x)$
from $y$; the lower block is the Lur'e-type injection observer that uses both
$y$ and the estimate $\hat{z}_2$.
Figure~\ref{fig:combined_observer} shows the resulting architecture and
contrasts it with the standard design.

\begin{figure}[ht]
  \centering
  \resizebox{\columnwidth}{!}{%
  \begin{tikzpicture}[
    node distance=0.45cm and 0.7cm,
    block/.style = {draw, fill=BoxFillColor, rectangle, minimum height=1.0cm, minimum width=1.5cm, rounded corners, font=\scriptsize, align=center, inner sep=2pt},
    arrow/.style = {draw, -{Latex[length=1.5mm]}, thick},
    label/.style = {font=\scriptsize\itshape},
  ]
    \node [block, fill=mDarkTeal!15] (sys1) {\textbf{Plant}};
    \node [block, fill=RoyalPurp!10, right=0.7cm of sys1] (luen1) {\textbf{Luenberger}\\ \textbf{observer}};
    \node [right=0.5cm of luen1, font=\scriptsize\bfseries] (xhat1) {$\hat{x}$};
    \draw [arrow] (sys1) -- node[above, font=\scriptsize] {$y$} (luen1);
    \draw [arrow] (luen1) -- (xhat1);
    \node [label, RoyalPurp, above=0.15cm of $(sys1.north)!0.5!(luen1.north)$] {Standard};

    \node [block, fill=mDarkTeal!15, below=0.9cm of sys1] (sys2) {\textbf{Plant}};
    \node [block, fill=orange!15, right=0.7cm of sys2] (homo) {\textbf{Homogeneous}\\ \textbf{observers}};
    \node [block, fill=RoyalPurp!10, right=1.0cm of homo] (luen2) {\textbf{Luenberger}\\ \textbf{observer}};
    \node [right=0.5cm of luen2, font=\scriptsize\bfseries] (xhat2) {$\hat{x}$};

    \draw [arrow] (sys2) -- node[above, font=\scriptsize] {$y$} (homo);
    \draw [arrow] (sys2.south) -- ++(0,-0.55) -| node[font=\scriptsize, below, pos=0.28] {$y$} (luen2.south);
    \draw [arrow] (homo) -- node[above, font=\scriptsize] {$\hat{z}_2$} (luen2);
    \draw [arrow] (luen2) -- (xhat2);
    \node [label, mDarkTeal, above=0.15cm of $(sys2.north)!0.5!(luen2.north)$] {Proposed};

    \begin{scope}[on background layer]
      \node [draw=orange, densely dashed, rounded corners, inner sep=4pt,
             fit=(homo),
             label={[font=\scriptsize, orange]north west:{Reconstructs $\hat{z}_2$}}] {};
    \end{scope}
  \end{tikzpicture}%
  }
  \caption{Combined observer architecture.  \emph{Top:} the standard Luenberger observer driven by the measurement $y$ only.  \emph{Bottom:} the proposed design augments it with a family of parallel homogeneous observers that reconstruct the virtual output $\hat{z}_2 = CAx + CWf(x)$ from $y$, which is injected through the additional correction channel $K'$ together with the measurement $y$.}
  \label{fig:combined_observer}
\end{figure}

Throughout, we use the shorthands
\begin{equation}
  \label{eq:shorthand}
  \begin{aligned}
    A_{K'} &:= (I-K'C)A, \\
    W_{K'} &:= (I-K'C)W,
  \end{aligned}
\end{equation}
so that $A_{K'}=A$ and $W_{K'}=W$ when $K'=0$: the subscript records that
these are the system matrices left after the injection $K'$ removes the
part of the dynamics visible through the output.  Since $\Gamma$ is
diagonal, $\Gamma^\top=\Gamma$.

\begin{remark}[Why $E=0$ in the analysis and in the simulations]
  \label{rem:Ezero}
  When $E=0$, the matrix inequality~\eqref{eq:LMI} below is linear in the
  decision variables $(P,PK,PK',\Lambda)$ and can be solved by standard
  semidefinite programming.  With $E\neq0$, the product $E\Lambda$ makes the
  condition bilinear, requiring iterative methods~\cite{Fan2003}.
  Since the coupling attenuation acts on $W_{K'}$ through $K'$ alone and does
  not rely on $E$, we set $E=0$ in the stability analysis of
  Section~\ref{sec:iss} and in all numerical simulations
  (Section~\ref{sec:simulations}).
\end{remark}

\subsection{Design steps (heuristics)}
\begin{enumerate}
  \item Defining $z_{1,j}=[Cx]_j$ and $z_{2,j}=[CAx + CWf(x)]_j$, the dynamics
        per channel satisfy $\dot{z}_{1,j} = z_{2,j} + [CBu]_j$, a scalar
        triangular form~\cite{bernard2017observers} with known feedforward $[CBu]_j$.
        Under Assumption~\ref{ass:compact}, $\dot{z}_{2,j}$ is bounded; a
        homogeneous observer~\cite{Levant2003,Moreno2008} then recovers $z_{2,j}$
        from $z_{1,j}$ in prescribed finite time $T_0$
        for sufficiently large $L_j$~\cite{Moreno2008,bernard2017observers}.
        The observer acts as a robust exact reconstructor and delivers
        $\hat{z}_2 = z_2$.
        For $A = -a I$ (Wilson--Cowan; Section~\ref{sec:simulations}),
        we have $z_2 = CWf(x)$ only, reducing the required gain $L_j$.
  \item Once $\hat{z}_2(t) = y_2(t)$ for $t\geq T_0$, the lower observer is
        driven by both $y$ and the exact $y_2$.  With $e = \hat{x}-x$,
        $\hat{\zeta}-x = (I-EC)e$, and $\delta = f(\hat{\zeta})-f(x)$, the
        error dynamics become
        \begin{equation}
          \label{eq:error}
          \dot{e} = \bigl(A_{K'} - KC\bigr)e + W_{K'}\delta.
        \end{equation}
        Gain $K'$ replaces $W$ by $W_{K'}$ in the coupling term, while $K$
        damps the linear part and $E$ shifts the sector argument.
  \item Solve for $(K,K',E,P,\Lambda)$ to satisfy the LMI \eqref{eq:LMI}
        (Theorem~\ref{thm:main}).
\end{enumerate}

The architecture adds $2n_y$ scalar states to the $n$ states of
the Lur'e-type observer; when $n_y \ll n$ this overhead is negligible.
The second-order sliding-mode gains $L_j$ must be tuned per channel and the discontinuous
right-hand side requires a dedicated integration scheme~\cite{Moreno2008}.

\begin{proposition}[Homogeneous observer]
  \label{prop:sliding}
  Under Assumption~\ref{ass:compact}, consider the homogeneous observer
  (upper block of~\eqref{eq:observer}), restated per channel
  $j=1,\ldots,n_y$ as
  \begin{align}
    \dot{\hat{z}}_{1,j} &= \hat{z}_{2,j} + [CBu]_j
      + L_j k_1 \spow{y_j - \hat{z}_{1,j}}^{1/2},
      \label{eq:homo_1} \\
    \dot{\hat{z}}_{2,j} &\in L_j^2 k_2\,\sign(y_j
      - \hat{z}_{1,j}),
      \label{eq:homo_2}
  \end{align}
  under the perturbed system~\eqref{eq:system}.
  There exist constants $k_1,k_2$ such that, for any $\bar{p},\bar{\nu}>0$,
  there exist $L_j^*\geq 1$, a class-$\mathcal{KL}$ function $\beta$, and
  a constant $\gamma>0$ (depending on $\bar{p},\bar{\nu}$ and on the
  system data: $\|W\|$, $\|C\|$, the regularity of $f$, and
  $\mathcal{K}$) such that, for all $L_j\geq L_j^*$, any
  solution of the combined system~\eqref{eq:system}--\eqref{eq:homo_2}
  satisfies, for all $t\geq 0$,
  \begin{multline}
    \label{eq:ISS_homogeneous}
    |\hat{z}_{2,j}(t) - [CAx(t)+CWf(x(t))]_j|
    \leq \\ \max\Bigl(\beta\bigl(|\hat{z}_j(0)-z_j(0)|,\,t\bigr),\;
    \gamma\bigl(L_j\,\bar{\nu}^{1/2}
    + \bar{p}\bigr)\Bigr).
  \end{multline}
  In the absence of noise and disturbances ($\bar{\nu}=\bar{p}=0$),
  $\hat{z}_{2,j}(t)=[CAx(t)+CWf(x(t))]_j$ for all $t\geq T_0$, for
  some $T_0>0$.
\end{proposition}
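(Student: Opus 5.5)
We reduce each channel to the standard perturbed super-twisting error system and then invoke the known homogeneous ISS estimates for it. Fix $j$ and set $z_{1,j}=[Cx]_j$, $z_{2,j}=[CAx+CWf(x)]_j$. Along solutions of~\eqref{eq:system},
\begin{align*}
\dot z_{1,j} &= z_{2,j}+[CBu]_j+[Cp]_j,\\
\dot z_{2,j} &= \rho_j(t) := \bigl[CA(Ax+Wf(x)+Bu+p)\bigr]_j + \bigl[CW J_f(x)(Ax+Wf(x)+Bu+p)\bigr]_j .
\end{align*}
Define the errors $\varepsilon_1=z_{1,j}-\hat z_{1,j}$ and $\varepsilon_2=z_{2,j}-\hat z_{2,j}$, and note that $y_j=z_{1,j}+\nu_j$. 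Then
\begin{align*}
\dot\varepsilon_1 &= \varepsilon_2 + [Cp]_j - L_jk_1\spow{\varepsilon_1+\nu_j}^{1/2},\\
\dot\varepsilon_2 &\in \rho_j - L_j^2k_2\sign(\varepsilon_1+\nu_j),
\end{align*}
which is the super-twisting (Levant) differentiator error system. Under Assumption~\ref{ass:compact}, $x\in\mathcal K$ and $f$ is $C^1$, so $J_f$ is bounded on $\mathcal K$. If $Bu$ is bounded, this gives $|\rho_j|\le R:=R_0+c\bar p$, with $R_0$ depending on $\|A\|,\|W\|,\|C\|$, $\sup_{\mathcal K}\|J_f\|$ and $\mathcal K$. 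Since the statement only assumes $u\in L^2$, it may be necessary to either assume $u$ bounded (or $CB=0$ in the term $CA Bu$, as in the Wilson--Cowan case) or absorb it into the disturbance; I would state this explicitly.

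Next I rescale time and state to normalize $L_j$. Set $\tilde\varepsilon_1=\varepsilon_1$, $\tilde\varepsilon_2=\varepsilon_2/L_j$, $\tau=L_jt$. This gives the normalized system
\begin{align*}
\tilde\varepsilon_1' &= \tilde\varepsilon_2 - k_1\spow{\tilde\varepsilon_1+\nu_j}^{1/2} + [Cp]_j/L_j,\\
\tilde\varepsilon_2' &\in -k_2\sign(\tilde\varepsilon_1+\nu_j) + \rho_j/L_j^2 .
\end{align*}
I choose $(k_1,k_2)$ once and for all (e.g.\ $k_1=1.5\sqrt{1.1}$, $k_2=1.1$, Levant's choice) so that the unperturbed system with $|\rho|\le 1$ is finite-time stable. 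I then pick $L_j^*\ge\max(1,\sqrt R)$ so that $|\rho_j|/L_j^2\le 1$ for $L_j\ge L_j^*$. Finite-time stability of the normalized system follows from Moreno's strict quadratic-like Lyapunov function $V=\xi^\top Q\xi$ with $\xi=(\spow{\tilde\varepsilon_1}^{1/2},\tilde\varepsilon_2)$, which satisfies $\dot V\le -cV^{1/2}$. This already gives the noise-free claim: $\varepsilon_2\equiv0$ for $t\ge T_0$, with $T_0\lesssim V(0)^{1/2}/(cL_j)$.

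For the noisy case I use homogeneity with weights $(2,1)$ for $(\tilde\varepsilon_1,\tilde\varepsilon_2)$. The noise $\nu_j$ enters with weight $2$, and the additive term $[Cp]_j$ enters with weight $1$. The standard result for homogeneous systems that are asymptotically stable with homogeneous perturbations (Bernuau--Efimov--Perruquetti--Polyakov; Moreno's ISS Lyapunov analysis) yields
\[
|\tilde\varepsilon_2(\tau)|\le\max\bigl(\beta(|\tilde\varepsilon(0)|,\tau),\ \gamma(|\nu|_\infty^{1/2}+|p|_\infty/L_j)\bigr),
\]
after the bounded drift $\rho/L_j^2$ is absorbed in the stability margin. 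Undoing the scaling, $\varepsilon_2=L_j\tilde\varepsilon_2$, gives the $L_j\bar\nu^{1/2}$ term and a $\bar p$ term. Because $L_j\ge1$, the factor $L_j$ can be absorbed into a relabelled $\mathcal{KL}$ function $\beta$ in the initial-condition part, and the slower time $\tau=L_jt\ge t$ only helps. The result is~\eqref{eq:ISS_homogeneous}.

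The main obstacle is this last ISS step. We need a genuine ISS Lyapunov function for the discontinuous Filippov system, with the noise inside the $\sign$ and square-root terms, and with gain constants uniform in $L_j$. The $\bar p$ contribution also enters through $\rho_j$ and must be kept inside the normalized margin rather than producing a gain that blows up. I would handle this by using Moreno's $V$ and bounding $\spow{\tilde\varepsilon_1+\nu}^{1/2}-\spow{\tilde\varepsilon_1}^{1/2}$ by $|\nu|^{1/2}$, which is Hölder continuity of exponent $1/2$. This yields $\dot V\le -cV^{1/2}+c'(|\nu|^{1/4}V^{1/4}\cdots)$, and Young's inequality then closes the estimate.
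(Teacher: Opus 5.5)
Your route is essentially the paper's. The paper also rewrites each channel as the $m=2$ triangular form, with $w_1=[Cp]_j$, $v=\nu_j$ and bounded $\dot z_{2,j}$, and then reads~\eqref{eq:ISS_homogeneous} off \cite[Prop.~4, Eq.~(16)]{bernard2017observers} with weights $(2,1,0)$. Your dilation $\tau=L_jt$, $\tilde\varepsilon_2=\varepsilon_2/L_j$ makes explicit why $\gamma$ does not depend on $L_j$, why the noise enters as $L_j\bar\nu^{1/2}$, and why $[Cp]_j$ is not attenuated. Two of your remarks are correct and go beyond the paper. First, $u\in L^2$ does not bound $\rho_j$; the paper's proof declares $\Phi_2$ bounded even though $\Phi_2$ contains $Bu$. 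Second, your $\dot z_{2,j}=[C(A+WJ_f(x))\dot x]_j$ is the correct derivative; the paper's display has $I$ in place of $A$. Two of your steps, however, do not go through as written.

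\textbf{Absorbing $L_j$ into $\beta$.} Undoing the scaling gives $L_j\beta(|\tilde\varepsilon(0)|,L_jt)$, and the factor $L_j$ can only be absorbed into a $\beta$ that depends on $L_j$. Because of peaking, no argument can do better. Take $\nu\equiv0$, $p\equiv0$ and $\varepsilon(0)=(s,0)$ with $s>0$. In normalized time, $\tilde\varepsilon_2\le0$ and $\tilde\varepsilon_1$ decreases but stays positive on $[0,c_0s^{1/2}]$, with $c_0$ depending only on $(k_1,k_2)$; on that interval $\tilde\varepsilon_2'\le-(k_2-1)$. Hence $|\varepsilon_2|\ge(k_2-1)c_0s^{1/2}L_j$ at $t=c_0s^{1/2}/L_j$. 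Choose $s$ so large that $(k_2-1)c_0s^{1/2}>\gamma\bar\nu^{1/2}$; then this eventually exceeds both $\beta(s,0)$ and $\gamma(L_j\bar\nu^{1/2}+\bar p)$ as $L_j\to\infty$. So the statement's quantifier order, with $\beta$ fixed before ``for all $L_j\ge L_j^*$'', cannot be met, and the paper's one-line citation glosses over this too. What you actually prove is: for each $L_j\ge L_j^*$ a $\beta$ depending on $L_j$, together with the $L_j$-uniform floor and the finite-time claim. That is all Theorem~\ref{thm:main} and Proposition~\ref{prop:iss} use, and you should state the result in that form.

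\textbf{The self-contained ISS closure.} With $\xi_1=\spow{\tilde\varepsilon_1}^{1/2}$, every perturbation of $\tilde\varepsilon_1'$ enters $\dot V$ multiplied by $|\tilde\varepsilon_1|^{-1/2}$. So your H\"older bound produces a term of order $|\xi|\,|\nu|^{1/2}/|\xi_1|$, not $|\nu|^{1/4}V^{1/4}$. In addition, $\sign(\tilde\varepsilon_1+\nu)-\sign(\tilde\varepsilon_1)$ is of order one on the band $|\tilde\varepsilon_1|\le|\nu|$, and the H\"older argument does not touch it. Young's inequality against $-cV^{1/2}$ therefore cannot close the estimate.

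It does close if you keep Moreno's sharp estimate $\dot V\le-|\xi_1|^{-1}\xi^\top\tilde Q\xi$ (valid when $|\rho_j|/L_j^2\le1$) instead of its weakened form $-cV^{1/2}$. Write the sign mismatch $\Delta$ as $|\xi_1|^{-1}(|\xi_1|\Delta)$, where $|\xi_1\Delta|\le2|\nu|^{1/2}$. Then the square-root mismatch, the sign mismatch and $[Cp]_j/L_j$ together contribute at most $c\,|\xi_1|^{-1}|\xi|(\bar\nu^{1/2}+\|C\|\bar p/L_j)$. The decay term dominates this once $|\xi|\gtrsim\bar\nu^{1/2}+\|C\|\bar p/L_j$. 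Since $|\xi_1|\le|\xi|$, you get $\dot V\le-c'V^{1/2}$ outside that ball, which is exactly the normalized floor you need. Alternatively, rely on the cited homogeneous result as the paper does.
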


\begin{proof}
  Define $z_{1,j}=[Cx]_j$ and $z_{2,j}=[CAx+CWf(x)]_j$.
  Differentiating along~\eqref{eq:system} with disturbance $p$,
  $\dot{z}_{1,j}=z_{2,j}+[CBu]_j+[Cp]_j$,
  $y_j=z_{1,j}+\nu_j$, which is the triangular
  form~\cite{bernard2017observers} with $m=2$, $\Phi_1=[CBu]_j$,
  $w_1=[Cp]_j$, and $v=\nu_j$.
  The second channel is
  \begin{multline*}
    \dot{z}_{2,j}
    = \bigl[C(I + W\partial_x f(x))(Ax + Wf(x) \\
         + p(t) + Bu(t))\bigr]_j
    := \Phi_2(z_{1,j},z_{2,j},x(t),t).
  \end{multline*}
  Under Assumption~\ref{ass:compact}, $x\in\mathcal{K}$, $f$ is
  Lipschitz, and $\Phi_2$ involves $A$, $W$, $C$, $f(x)$, and the
  Jacobian of $f$ along the flow; hence $\Phi_2$ is uniformly bounded
  by a constant depending on $\|W\|$, $\|C\|$, the Lipschitz constant
  of $f$, and $\mathcal{K}$.
  Crucially, the expression of $\Phi_2$ need not be known.
  The observer~\eqref{eq:homo_1}--\eqref{eq:homo_2} is the homogeneous
  observer of~\cite{bernard2017observers} with $d_0=-1$ and gains
  $(L_jk_1,\,L_j^2k_2)$.  For $m=2$ and $d_0=-1$ the weight vector
  is $(r_1,r_2,r_3)=(2,1,0)$, which gives the correction exponents
  $r_2/r_1=1/2$ and $r_3/r_1=0$ in~\eqref{eq:homo_1}--\eqref{eq:homo_2}.
  The bound~\eqref{eq:ISS_homogeneous} is
  \cite[Proposition~4, Eq.~(16)]{bernard2017observers} evaluated at the
  second component $i=2$: the noise term is
  $L_j^{i-1}|v|^{r_2/r_1}=L_j\bar{\nu}^{1/2}$, while the first-channel
  disturbance $w_1=[Cp]_j$ enters with exponent $r_2/r_2=1$ and gain
  $L_j^{-\mu}$ with $\mu=(1-i+1)\,r_1/r_2=0$, i.e.\ without attenuation.
  The latter is unavoidable: since $\dot{z}_{1,j}=z_{2,j}+\Phi_1+w_1$, a
  disturbance on the first channel is indistinguishable from a shift of
  $z_{2,j}$.  Only channels $1,\ldots,i-1$ appear in Eq.~(16), so the
  disturbance $\Phi_2$ on the second channel does not enter the
  steady-state estimate.
  The noiseless convergence is the special case $\bar{\nu}=\bar{p}=0$.
\end{proof}

\begin{remark}[Convergence rate versus noise amplification]
  The $\mathcal{KL}$ function $\beta$ in~\eqref{eq:ISS_homogeneous}
  can be made to be equal to $0$ arbitrarily fast by increasing $L_j$. In other words, for any prescribed
  $T_0>0$, there exists $L_j$ large enough such that 
  for $t\geq T_0$, the reconstruction error reduces to
  $|\hat{z}_{2,j}-z_{2,j}| \leq \gamma(L_j\bar{\nu}^{1/2} + \bar{p})$.
  Increasing $L_j$ thus reduces the convergence time but amplifies the
  $L_j$ factor in the noise floor. This is the trade-off
  between speed and noise sensitivity
  ~\cite{Levant2003,Moreno2008,Moreno2012,bernard2017observers}.
\end{remark}

\begin{remark}[H\"older nonlinearity with $f(0)=0$, $\alpha\geq1/2$]
  \label{rem:holder}
  The proof of Proposition~\ref{prop:sliding} requires $\Phi_2 = \dot{z}_2$ to
  be bounded along trajectories, which is not granted when $f'$ is unbounded.
  It nevertheless holds in the following case.  When $f(0)=0$ and $f$ is
  component-wise H\"older with exponent $\alpha\geq 1/2$, the product
  $f'(x_i)\dot{x}_i$ remains bounded along trajectories of the Hurwitz system
  $\dot{x} = -a x + Wf(x)$: near the origin $\dot{x}=O(\|x\|^{\alpha})$ while
  $f'(x)=O(\|x\|^{\alpha-1})$, so $f'(x_i)\dot{x}_i = O(\|x\|^{2\alpha-1})$,
  which is $O(1)$ for $\alpha\geq 1/2$.  Proposition~\ref{prop:sliding}
  therefore applies.
\end{remark}

\begin{theorem}[Combined observer]
  \label{thm:main}
  Under Assumption~\ref{ass:compact}, suppose there exist $P \succ 0$,
  diagonal $\Lambda \succ 0$, $q > 0$, and matrices $K, K', E$ such that
  \begin{equation}
    \label{eq:LMI}
    \mathcal{M} := \begin{pmatrix}
      M_{11} & M_{12} \\ M_{12}^\top & M_{22}
    \end{pmatrix} \preceq 0 ,
  \end{equation}
  where
  \begin{subequations}
    \label{eq:blocks}
    \begin{align}
      M_{11} &= \He\{P(A_{K'} - KC)\} + qI, \label{eq:M11}\\
      M_{12} &= P W_{K'} + (I-EC)^\top\Gamma\Lambda, \label{eq:M12}\\
      M_{22} &= -2\Lambda, \label{eq:M22}
    \end{align}
  \end{subequations}
  with $A_{K'}$, $W_{K'}$ as in~\eqref{eq:shorthand}.
  Then, in the noise-free case ($\nu \equiv 0$, $p \equiv 0$), for any
  $L_j \geq L_j^*$ ($j=1,\ldots,n_y$), where
  $L_j^*$ are the thresholds from Proposition~\ref{prop:sliding},
  the observer \eqref{eq:observer} achieves
  $\|e(t)\| \leq \kappa_P\, e^{-\rho(t-T_0)}\|e(T_0)\|$ for all $t \geq T_0$,
  with $T_0$ the finite reconstruction time of
  Proposition~\ref{prop:sliding},
  $\kappa_P = \sqrt{\lambda_{\max}(P)/\lambda_{\min}(P)}$ and
  $\rho = q/(2\lambda_{\max}(P))$.
\end{theorem}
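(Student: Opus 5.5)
The plan is a standard quadratic Lyapunov argument on the error dynamics, started at the reconstruction time $T_0$.

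\textbf{Step 1: reduce to exact error dynamics.} In the noise-free case, Proposition~\ref{prop:sliding} gives, for $L_j\ge L_j^*$, that $\hat z_2(t)=CAx(t)+CWf(x(t))$ for all $t\ge T_0$. Substituting $y=Cx$ and this identity into the lower block of \eqref{eq:observer}, and subtracting \eqref{eq:system} with $p\equiv0$, I expect the terms $Bu$ to cancel. With $e=\hat x-x$ and $\delta=f(\hat\zeta)-f(x)$, where $\hat\zeta-x=(I-EC)e$, the $K'$-correction becomes $-K'C(Ae+W\delta)$. This yields exactly \eqref{eq:error}, $\dot e=(A_{K'}-KC)e+W_{K'}\delta$, on $[T_0,\infty)$. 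The solution exists on $[0,T_0]$ because $f$ is Lipschitz and $\hat z_2$ is bounded on the Filippov solution, so $e$ does not escape before $T_0$ and $e(T_0)$ is well defined.

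\textbf{Step 2: Lyapunov derivative and S-procedure.} Take $V(e)=e^\top Pe$. Along \eqref{eq:error},
\begin{equation*}
\dot V=e^\top\He\{P(A_{K'}-KC)\}e+2e^\top PW_{K'}\delta .
\end{equation*}
Apply the sector condition \eqref{eq:sector} with the pair $(\hat\zeta,x)$. This is legitimate because \eqref{eq:sector} holds for all pairs of points, with increment $\tilde e=(I-EC)e$. It gives $0\le -2\delta^\top\Lambda\delta+2\delta^\top\Lambda\Gamma(I-EC)e$. Adding this nonnegative quantity to $\dot V$ produces
\begin{equation*}
\dot V\le \begin{pmatrix}e\\ \delta\end{pmatrix}^{\!\top}\bigl(\mathcal M-\diag(qI,0)\bigr)\begin{pmatrix}e\\ \delta\end{pmatrix}\le -q\|e\|^2 .
\end{equation*}
Here I use $\Lambda\Gamma=\Gamma\Lambda$, since both matrices are diagonal, to match the off-diagonal block $M_{12}$ in \eqref{eq:M12}. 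The last inequality uses $\mathcal M\preceq0$.

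\textbf{Step 3: comparison.} Since $\|e\|^2\ge V/\lambda_{\max}(P)$, we get $\dot V\le -2\rho V$ with $\rho=q/(2\lambda_{\max}(P))$. Grönwall's inequality then gives $V(t)\le e^{-2\rho(t-T_0)}V(T_0)$. Sandwiching with $\lambda_{\min}(P)\|e\|^2\le V\le\lambda_{\max}(P)\|e\|^2$ gives the stated bound with $\kappa_P$.

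\textbf{Main obstacle.} The delicate point is Step 1: justifying that the observer error is well defined and finite up to $T_0$, and that the switch to exact reconstruction is clean. Before $T_0$, the lower block is driven by a merely bounded signal $\hat z_2$, so I would argue linear-growth boundedness of $\hat x$ on the compact interval $[0,T_0]$. The algebra in Step 2 is routine once the sign conventions of $\delta$ and the $(I-EC)$ factor are matched carefully.
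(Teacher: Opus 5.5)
Your proposal is correct and follows essentially the same route as the paper: you derive the exact error dynamics \eqref{eq:error} for $t\ge T_0$, use the quadratic Lyapunov function $e^\top Pe$ with the S-procedure term from the sector condition applied at the pair $(\hat\zeta,x)$, and finish with the comparison and sandwich step that yields $\kappa_P$ and $\rho$. Your argument that $e(T_0)$ is finite relies on linear growth, i.e.\ on $f$ being globally Lipschitz. This is slightly sharper than the paper's appeal in Remark~\ref{rem:transient} to local Lipschitzness, which by itself would not rule out finite escape before $T_0$.
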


\begin{proof}
  For $t\geq T_0$, Proposition~\ref{prop:sliding} gives $\hat{z}_2(t) = y_2(t)$
  exactly.  Thus $e = \hat{x}-x$ satisfies \eqref{eq:error} with
  $\delta = f(\hat{\zeta})-f(x)$ and $\hat{\zeta}-x = (I-EC)e$.  Applying
  the sector condition \eqref{eq:sector} with $\hat{x}=\hat{\zeta}$:
  \begin{equation}
    \label{eq:sector_applied}
    \sum_{i=1}^n \lambda_i\,\delta_i\bigl(\delta_i - \gamma_i[(I-EC)e]_i\bigr) \leq 0,
  \end{equation}
  that is, in matrix form,
  $\delta^\top\Lambda(\delta - \Gamma (I-EC)e) \leq 0$.
  Consider $\mathcal{V}(e) = e^\top Pe$.  Its derivative along \eqref{eq:error} is
  \begin{equation*}
    \dot{\mathcal{V}} = e^\top\He\{P(A_{K'} - KC)\}e
                        + 2\,e^\top P W_{K'}\delta.
  \end{equation*}
  Expanding the blocks~\eqref{eq:blocks} for
  $\xi = (e^\top,\delta^\top)^\top$ gives the identity
  \begin{equation*}
    \xi^\top\mathcal{M}\xi
      = \dot{\mathcal{V}} + q\|e\|^2
        - 2\,\delta^\top\Lambda\bigl(\delta - \Gamma (I-EC)e\bigr),
  \end{equation*}
  which is the standard S-procedure step for Lur'e-type
  observers~\cite{Arcak2001,Fan2003}.  Since the last term is nonnegative
  by~\eqref{eq:sector_applied} and $\xi^\top\mathcal{M}\xi\leq0$
  by~\eqref{eq:LMI},
  \begin{equation*}
    \dot{\mathcal{V}} + q\|e\|^2 \leq \xi^\top\mathcal{M}\xi \leq 0 .
  \end{equation*}
  Hence $\dot{\mathcal{V}} \leq -q\|e\|^2 \leq
  -\tfrac{q}{\lambda_{\max}(P)}\mathcal{V}$, giving
  $\mathcal{V}(e(t)) \leq \mathcal{V}(e(T_0))\,
  e^{-\frac{q}{\lambda_{\max}(P)}(t-T_0)}$,
  and the stated bound follows from $\lambda_{\min}(P)\|e\|^2 \leq \mathcal{V}
  \leq \lambda_{\max}(P)\|e\|^2$.  The error $e(T_0)$ is finite by
  Remark~\ref{rem:transient}.
\end{proof}

\begin{corollary}[Output-linear drift]
  \label{cor:outlin}
  Suppose $CA = MC$ for a known matrix $M \in \R^{n_y\times n_y}$.
  Then $CAx = My$ is directly available from the output; the
  sliding-mode observer only needs to reconstruct
  $CWf(x)$.  The observer simplifies to
  \begin{equation}
    \label{eq:observer_wc}
    \begin{cases}
      \begin{aligned}
        \dot{\hat{z}}_{1,j} &= [M\hat{z}_1]_j + \hat{z}_{2,j} + [CBu]_j \\
          &\quad + L_j k_1\spow{y_j - \hat{z}_{1,j}}^{1/2},
      \end{aligned} \\[3pt]
      \dot{\hat{z}}_{2,j} \in  L_j^2k_2\,\sign(y_j - \hat{z}_{1,j}),
        \qquad j=1,\ldots,n_y, \\[5pt]
      \begin{aligned}
        \dot{\hat{x}} &= A\hat{x} + Wf(\hat{\zeta})
          + K(y - C\hat{x}) \\
        &\quad - K'\bigl(CWf(\hat{\zeta}) - \hat{z}_2\bigr) + Bu,
      \end{aligned}
    \end{cases}
  \end{equation}
  and the error dynamics reduce to
  $\dot{e} = (A-KC)e + W_{K'}\delta$.
  Theorem~\ref{thm:main} holds under the same assumptions with the
  LMI~\eqref{eq:LMI} replaced by the matrix $\mathcal{M}_1$, defined by
  \begin{equation}
    \label{eq:LMI_1}
    \mathcal{M}_1: \quad M_{11} = \He\{P(A - KC)\} + qI ,
  \end{equation}
  the blocks $M_{12}$, $M_{22}$ being unchanged.  The only difference
  from~\eqref{eq:M11} is the absence of the $-PK'CA$ term, because $CAx$ is
  known.  A relevant model with this property is the Wilson--Cowan
  neural-mass model~\cite{Wilson1972,wilson1973mathematical}: its drift is
  $A = -a I_n$, so $CA = MC$ holds with $M = -a I_{n_y}$.
\end{corollary}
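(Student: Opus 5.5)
The proof follows the two-stage structure of Theorem~\ref{thm:main}: first finite-time reconstruction of the virtual output, then a Lyapunov/S-procedure argument on the reduced error dynamics.

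\textbf{Reconstruction stage.} With $CA=MC$ we have $CAx = My$. Set $z_{1,j}=[Cx]_j$ and redefine $z_{2,j} := [CWf(x)]_j$. Then $\dot z_1 = Mz_1 + z_2 + CBu$ in the noise-free case. This is no longer scalar triangular per channel, because $M$ couples the channels. However, the term $[Mz_1]_j$ is a known function of the measured output. So I will let the observer use $[My]_j$ in place of $[M\hat z_1]_j$, treating it as a known feedforward like $[CBu]_j$. Alternatively, I will note that the difference $[M(\hat z_1 - z_1)]_j$ is a Lipschitz perturbation of the first channel that vanishes once $\hat z_1 = z_1$. With the first reading, channel $j$ has exactly the form used in the proof of Proposition~\ref{prop:sliding}: the feedforward is $\Phi_1 = [My + CBu]_j$ and the second channel is
\begin{equation*}
\dot z_{2,j} = [CW\partial_x f(x)\dot x]_j.
\end{equation*}
This derivative is bounded on $\mathcal{K}$ under Assumption~\ref{ass:compact}. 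Hence the same invocation of the homogeneous observer result gives $\hat z_2 = CWf(x)$ for $t\ge T_0$, provided $L_j\ge L_j^*$.

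I expect this step to be the main obstacle if the $[M\hat z_1]_j$ form is kept literally. In that case the error dynamics become cross-coupled through $M$, and I would handle it in one of two ways. The first is to absorb $M(\hat z_1-z_1)$ as a state-dependent perturbation dominated by the homogeneous correction term $\spow{\cdot}^{1/2}$ near zero, using the standard degree argument of homogeneous observers. The second is to observe that the coupling is linear and of higher homogeneity degree, so it does not destroy finite-time convergence on compacts once $L_j$ is large enough.

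\textbf{Error dynamics.} For $t\ge T_0$, I subtract the plant from the $\hat x$ equation in \eqref{eq:observer_wc}, using $y=Cx$, $\hat z_2=CWf(x)$ and $\delta=f(\hat\zeta)-f(x)$. This gives
\begin{align*}
\dot e &= Ae + W\delta - KCe - K'CW\delta \\
&= (A-KC)e + W_{K'}\delta .
\end{align*}
There is no $-K'CAe$ term, because the $CA\hat x$ correction is absent from the observer. This is exactly why $M_{11}$ in \eqref{eq:LMI_1} contains $A$ instead of $A_{K'}$.

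\textbf{Lyapunov argument.} I repeat the proof of Theorem~\ref{thm:main} verbatim with $\mathcal V = e^\top P e$. Setting $\xi=(e,\delta)$, the identity
\begin{equation*}
\xi^\top\mathcal M_1\xi = \dot{\mathcal V} + q\|e\|^2 - 2\delta^\top\Lambda\bigl(\delta-\Gamma(I-EC)e\bigr)
\end{equation*}
holds, and the last term is nonnegative by the sector condition applied at $\hat\zeta$, as in \eqref{eq:sector_applied}. Combining this with $\mathcal M_1\preceq 0$ gives $\dot{\mathcal V}\le -q\|e\|^2$. The same comparison argument then yields the bound with constants $\kappa_P$ and $\rho$. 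Finiteness of $e(T_0)$ follows as in Theorem~\ref{thm:main}.
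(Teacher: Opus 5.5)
This is essentially the paper's own (implicit) argument: there the corollary is justified only by noting that the $K'$ correction no longer contains $CA\hat x$, so that $\dot e=(A-KC)e+W_{K'}\delta$, after which the S-procedure of Theorem~\ref{thm:main} is rerun with $A_{K'}$ replaced by $A$, while the reconstruction of $CWf(x)$ is taken for granted. Your extra care about the $[M\hat z_1]_j$ term goes beyond the paper and is well placed. Replacing it by $[My]_j$ (a mild change to the observer as stated) reduces each channel exactly to Proposition~\ref{prop:sliding}; in the literal form, the linear cross term $M(\hat z_1-z_1)$ dominates the $|\cdot|^{1/2}$ correction for large errors, so your ``on compacts'' caveat is genuinely needed unless $M$ is damping, as for $M=-aI$.
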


\begin{proposition}[Direct nonlinear output]
  \label{prop:direct}
  Suppose $f$ acts component-wise and each row of $C$ is a standard basis
  vector of $\R^n$ (each output channel measures exactly one state
  component).  Then $Cf(x) = f(Cx) = f(y)$ is directly available from the
  measured output, and the second correction channel of~\eqref{eq:observer}
  can be driven by $f(y)$ instead of $\hat{z}_2$:
  \begin{equation}
    \label{eq:observer_direct}
    \begin{aligned}
      \dot{\hat{x}} &= A\hat{x} + Wf(\hat{\zeta}) + K(y - C\hat{x}) \\
        &\quad - K'\bigl(Cf(\hat{\zeta}) - f(y)\bigr) + Bu .
    \end{aligned}
  \end{equation}
  The family of homogeneous observers and its tuning are eliminated
  entirely.  The error dynamics are
  $\dot{e} = (A-KC)e + (W-K'C)\delta$, and Theorem~\ref{thm:main} holds
  with the LMI~\eqref{eq:LMI} replaced by the matrix $\mathcal{M}_2$,
  defined by
  \begin{equation}
    \label{eq:LMI_2}
    \mathcal{M}_2: \quad
    \begin{aligned}
      M_{11} &= \He\{P(A - KC)\} + qI, \\
      M_{12} &= P(W - K'C) + (I-EC)^\top\Gamma\Lambda ,
    \end{aligned}
  \end{equation}
  and $M_{22}$ unchanged.
\end{proposition}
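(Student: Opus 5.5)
The plan has three steps: check the algebraic identity $Cf(x)=f(Cx)$, derive the error dynamics of \eqref{eq:observer_direct}, and rerun the Lyapunov/S-procedure argument from the proof of Theorem~\ref{thm:main} with $W_{K'}$ replaced by $W-K'C$.

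\textbf{Step 1: the identity.} Let row $j$ of $C$ be $e_{\sigma(j)}^\top$. Then $[Cf(x)]_j=f_{\sigma(j)}(x_{\sigma(j)})$. Since $f$ acts component-wise, this equals $f_{\sigma(j)}(y_j)$ in the noise-free case, where $y_j=x_{\sigma(j)}$. Hence $f(y)$, understood as the vector of components $f_{\sigma(j)}(y_j)$, is an exact measured signal available for every $t\geq0$. No reconstruction is needed, so Proposition~\ref{prop:sliding} and the time $T_0$ play no role, and we may take $T_0=0$.

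\textbf{Step 2: the error dynamics.} Subtract \eqref{eq:system} (with $p\equiv0$, $\nu\equiv0$) from \eqref{eq:observer_direct}. With $\delta=f(\hat\zeta)-f(x)$, this gives
\begin{equation*}
\dot e=(A-KC)e+W\delta-K'\bigl(Cf(\hat\zeta)-Cf(x)\bigr)=(A-KC)e+(W-K'C)\delta .
\end{equation*}
As in Theorem~\ref{thm:main}, we have $\hat\zeta-x=(I-EC)e$. The sector condition \eqref{eq:sector} applied with $\hat x=\hat\zeta$ then gives $\delta^\top\Lambda(\delta-\Gamma(I-EC)e)\le0$.

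\textbf{Step 3: the Lyapunov argument.} Take $\mathcal V=e^\top Pe$. Its derivative is
\begin{equation*}
\dot{\mathcal V}=e^\top\He\{P(A-KC)\}e+2e^\top P(W-K'C)\delta .
\end{equation*}
With $\xi=(e^\top,\delta^\top)^\top$, expanding the blocks of $\mathcal M_2$ gives the same S-procedure identity as before:
\begin{equation*}
\xi^\top\mathcal M_2\xi=\dot{\mathcal V}+q\|e\|^2-2\delta^\top\Lambda(\delta-\Gamma(I-EC)e).
\end{equation*}
The last term is nonnegative by the sector condition, and $\mathcal M_2\preceq0$, so $\dot{\mathcal V}\le-q\|e\|^2$. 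The exponential bound with $\kappa_P$ and $\rho$ then follows exactly as in the proof of Theorem~\ref{thm:main}.

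\textbf{Main obstacle.} The only delicate point is Step~1. One must read $f(y)$ with the correct index bookkeeping: $f:\R^n\to\R^n$, so $f(Cx)$ means applying to each $y_j$ the scalar map associated with its measured component. One must also note that measurement noise now enters through $f(y)$ rather than through $\hat z_2$. The theorem is stated in the noise-free case, so this affects only later robustness bounds, not the present claim.
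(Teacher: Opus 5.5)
Your proof is correct and follows the argument the paper intends; the paper gives no separate proof and relies on the proof of Theorem~\ref{thm:main}. You substitute $f(y)=Cf(x)$ to get $\dot e=(A-KC)e+(W-K'C)\delta$, then rerun the same S-procedure identity with $A_{K'}$ replaced by $A$ and $W_{K'}$ by $W-K'C$. Your side remarks are sound clarifications of the paper's informal statement: $f(y)$ should be read componentwise as $f_{\sigma(j)}(y_j)$, and the exponential bound holds from $T_0=0$ because no reconstruction is needed.
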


Note that here $K'$ acts on the coupling as $W-K'C$ rather than as
$W_{K'}=(I-K'C)W$: the reconstructed signal is $Cf(x)$, not $CWf(x)$, so
Corollary~\ref{cor:outlin} does not apply.  Note also that the injected signal
$f(y)=f(Cx+\nu)$ carries the measurement noise through $f$: for $f$
H\"older with exponent $\alpha$ and constant $\ell$, this contributes a term
$\|K'\|\,\ell\,\bar{\nu}^{\alpha}$ to the steady-state error --- the same
$\bar{\nu}^{1/2}$ scaling as the reconstruction error of
Proposition~\ref{prop:sliding} when $\alpha=1/2$.

\begin{remark}[Increasing-only LMI]
  \label{rem:increasing}
  When the nonlinearity is only known to be component-wise increasing
  ($\delta_i e_i \geq 0$) but no slope bound $\gamma_i$ is available,
  the sector condition~\eqref{eq:sector} does not apply and the LMI
  \eqref{eq:LMI} cannot be used.  Applying the S-procedure with the
  increasing condition $\delta^\top \Lambda (I-EC)e \geq 0$
  ($\Lambda=\diag(\lambda_i)\succ 0$) to the Lyapunov derivative
  $\dot{\mathcal{V}}$ yields the same block structure~\eqref{eq:LMI} with
  $M_{12} = PW_{K'} + (I-EC)^\top\Lambda$ and $M_{22} = 0$.  A matrix with a
  vanishing $(2,2)$ block is negative semidefinite only if its off-diagonal
  block vanishes as well---a condition that is rarely feasible
  ($PW + (I-EC)^\top\Lambda = 0$ for the classical observer).  Adding a small
  regularisation $-\varepsilon I$ ($\varepsilon>0$) in the $(2,2)$ block
  yields the relaxed LMI $\mathcal{M}_3$, defined by
  \begin{equation}
    \label{eq:LMI_3}
    \mathcal{M}_3: \quad
    \begin{aligned}
      M_{12} &= P W_{K'} + (I-EC)^\top\Lambda, \\
      M_{22} &= -\varepsilon I .
    \end{aligned}
  \end{equation}
  Compared with~\eqref{eq:M12}, the term $(I-EC)^\top\Gamma\Lambda$ is
  replaced by $(I-EC)^\top\Lambda$ and the damping term $-2\Lambda$ by
  $-\varepsilon I$;
  $M_{11}$ is unchanged.  The combined observer retains its coupling
  attenuation because $K'$ replaces $W$ by $W_{K'}$ in the
  off-diagonal independently of $\Gamma$ (and independently of~$E$).

  The finite-time convergence of Proposition~\ref{prop:sliding} requires
  $\dot{z}_2$ to be bounded, which is not granted when $f'$ is unbounded;
  Remark~\ref{rem:holder} gives a class of H\"older nonlinearities for which
  it still holds, and the simulations of Sections~\ref{sec:case1}
  and~\ref{sec:case2} confirm that the reconstruction succeeds in that
  regime, with a significant performance gain for the proposed design.
\end{remark}

\begin{remark}[Transient on $[0,T_0)$]
  \label{rem:transient}
  On $[0,T_0)$, the reconstruction error $\varepsilon_2(t)$ is bounded by
  construction~\cite{Moreno2008,bernard2017observers}, and the observer error dynamics
  $\dot{e} = (A_{K'} - KC)e + W_{K'}\delta + K'\varepsilon_2(t)$,
  as in~\eqref{eq:error}, are locally Lipschitz in $e$
  (with $\varepsilon_2$ acting as a bounded exogenous input);
  hence $e(T_0)$ is finite.
  The exponential bound of Theorem~\ref{thm:main} therefore starts from a
  bounded initial condition.
\end{remark}

\section{Comparison with the standard Lur'e-type LMI}
\label{sec:comparison}

The standard Lur'e-type LMI \cite{Arcak2001,Fan2003,Zemouche2013,giaccagli2023lmi}
is recovered by setting $K'=0$ in~\eqref{eq:LMI} (so that $A_{K'}=A$,
$W_{K'}=W$); we denote the resulting matrix by $\mathcal{M}_{\mathrm{std}}$:
\begin{equation}
  \label{eq:LMI_std}
  \mathcal{M}_{\mathrm{std}}: \quad
  \begin{aligned}
    M_{11} &= \He\{P(A - KC)\} + qI, \\
    M_{12} &= PW + (I-EC)^\top\Gamma\Lambda ,
  \end{aligned}
\end{equation}
with $M_{22}$ unchanged.
We denote the observer gains delivered by the standard
LMI~\eqref{eq:LMI_std} and the combined LMI~\eqref{eq:LMI} by
$K_{\mathrm{std}}$ and $K_{\mathrm{comb}}$, respectively.
The coupling matrix $W$ enters the block $M_{12}$ of
$\mathcal{M}_{\mathrm{std}}$ without any design freedom; feasibility requires $P$ and $\Lambda$ to jointly
absorb $\|W\|$.  In contrast, \eqref{eq:M12} replaces $W$ by $W_{K'}$:
choosing $K'$ to reduce the effective norm
$\|W_{K'}\|$ can lower the observer gains needed to certify a given
convergence rate, which is especially useful when the classical design is
formally feasible but its gains become very large in practice.

\begin{proposition}[Inclusion of feasibility sets]
  \label{prop:inclusion}
  Any $(P,\Lambda,K,E)$ feasible for \eqref{eq:LMI_std} is feasible for
  \eqref{eq:LMI} with $K' = 0$.  Hence the feasibility
  set of \eqref{eq:LMI} contains that of \eqref{eq:LMI_std}.
  This is immediate, since \eqref{eq:LMI} evaluated at $K'=0$
  is \eqref{eq:LMI_std}.
\end{proposition}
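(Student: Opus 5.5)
The plan is to compare the block matrices entry by entry under the substitution $K'=0$. First I substitute $K'=0$ into the shorthands \eqref{eq:shorthand}, which gives $A_{K'}=(I-0\cdot C)A=A$ and $W_{K'}=(I-0\cdot C)W=W$. Next I insert these into the blocks \eqref{eq:blocks}:
\begin{equation*}
M_{11}=\He\{P(A-KC)\}+qI,\qquad M_{12}=PW+(I-EC)^\top\Gamma\Lambda,
\end{equation*}
while $M_{22}=-2\Lambda$ is left unchanged. These three blocks coincide with those of $\mathcal{M}_{\mathrm{std}}$ in \eqref{eq:LMI_std}. Hence, for every fixed $(P,\Lambda,q,K,E)$, the matrix $\mathcal{M}$ evaluated at $K'=0$ is identically equal to $\mathcal{M}_{\mathrm{std}}$.

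Now let $(P,\Lambda,q,K,E)$ be any tuple with $P\succ0$, $\Lambda\succ0$ diagonal, $q>0$, and $\mathcal{M}_{\mathrm{std}}\preceq0$. Then the tuple $(P,\Lambda,q,K,K'=0,E)$ satisfies the same sign constraints, and it satisfies $\mathcal{M}\preceq0$ because the two matrices are equal. This proves the stated inclusion of feasibility sets, where a feasible point of \eqref{eq:LMI_std} is embedded as the slice $K'=0$ of the feasible set of \eqref{eq:LMI}.

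There is no real obstacle here. The only point that needs care is the bookkeeping: the decision variables of the two problems differ only by the extra $K'$, and the sign constraints on $P$, $\Lambda$ and $q$ are identical in both problems. If one works with the linearized variables $(P,PK,PK',\Lambda)$ from Remark~\ref{rem:Ezero}, the embedding sets $PK'=0$, which is equivalent to $K'=0$ because $P\succ0$ is invertible. As a consequence, Theorem~\ref{thm:main} applied at $K'=0$ recovers the classical convergence guarantee.
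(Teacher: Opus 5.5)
Your proof is correct and matches the paper's argument: substituting $K'=0$ gives $A_{K'}=A$ and $W_{K'}=W$, so $\mathcal{M}$ coincides with $\mathcal{M}_{\mathrm{std}}$, and every feasible point of \eqref{eq:LMI_std} is a feasible point of \eqref{eq:LMI} on the slice $K'=0$. Your remark that $PK'=0$ is equivalent to $K'=0$ when $P\succ0$ is an accurate, harmless addition.
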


\begin{proposition}[Fixed $K'$ choice for coupling reduction]
  \label{prop:general}
  Assume $C$ has full row rank.  For arbitrary $A$, $W$, $\Gamma$, the choice $K' = C^\top(CC^\top)^{-1}$
  yields the effective coupling
  \begin{equation}
    \label{eq:opt_Kprime}
    \begin{aligned}
    W_{K'} &= \Pi_C\,W, \\
    \Pi_C &= I - C^\top(CC^\top)^{-1}C,
    \end{aligned}
  \end{equation}
  where $\Pi_C$ is the orthogonal projector onto $\ker C$
  ($\Pi_C^\top = \Pi_C = \Pi_C^2$).
  Consequently the coupling matrix appearing in $M_{12}$
  satisfies $\|W_{K'}\| = \|\Pi_C\,W\| \leq \|W\|$,
  with equality iff $CW = 0$ and strict inequality whenever $CW \neq 0$.

  If, in addition, the columns of $W$ lie in $\operatorname{range}(C^\top)$,
  i.e.\ $W = C^\top X$ for some $X$, then $W_{K'} = 0$: the nonlinear
  coupling is completely eliminated from the error dynamics, and the
  combined LMI~\eqref{eq:LMI} reduces to a linear stability condition
  independent of $W$.  Under the further condition $CA = MC$
  (Corollary~\ref{cor:outlin}), the LMI $\mathcal{M}_1$~\eqref{eq:LMI_1}
  applies.  If $A$ is Hurwitz, that LMI is then feasible with $K = 0$:
  the error dynamics are $\dot{e} = A e$, and the sector condition is
  absorbed by choosing $\Lambda$ small enough.
\end{proposition}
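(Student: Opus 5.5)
The plan is to verify the algebraic identities directly and then handle the LMI claims with a Schur-complement argument. First, with $C$ of full row rank, $CC^\top$ is invertible, so $K'=C^\top(CC^\top)^{-1}$ is well defined. Substituting into~\eqref{eq:shorthand} gives $W_{K'}=(I-C^\top(CC^\top)^{-1}C)W=\Pi_C W$. I will check $\Pi_C^\top=\Pi_C$ and $\Pi_C^2=\Pi_C$ by direct multiplication. I will also check that $\Pi_C$ fixes $\ker C$ and annihilates $\operatorname{range}(C^\top)$, which identifies it as the orthogonal projector onto $\ker C$.

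For the norm comparison I use $\|\Pi_C\|\le 1$, which gives $\|\Pi_C W\|\le\|\Pi_C\|\|W\|\le\|W\|$ by submultiplicativity. The cleanest route to the equality statement is the Pythagorean split $W=\Pi_C W+(I-\Pi_C)W$, whose two pieces have orthogonal column spaces. Hence $\|Wv\|^2=\|\Pi_C Wv\|^2+\|(I-\Pi_C)Wv\|^2$ for every $v$. If $CW=0$, then $(I-\Pi_C)W=C^\top(CC^\top)^{-1}CW=0$, so $W_{K'}=W$ and equality holds.

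The converse is the step I expect to be the main obstacle. For the induced $2$-norm, $CW\neq0$ does not by itself force strict inequality. Equality can persist when a maximizing right singular vector $v$ of $W$ satisfies $CWv=0$, even though $CW\neq0$ on other directions. I would first try to prove strictness in the Frobenius norm, where $\|W\|_F^2=\|\Pi_CW\|_F^2+\|(I-\Pi_C)W\|_F^2$ makes it immediate since $(I-\Pi_C)W\ne0\iff CW\ne0$. For the spectral norm I would state the sharp condition: strict inequality holds iff $CWv\neq0$ for every top right singular vector $v$ of $W$. If needed, I would flag that the "iff" as stated requires this refinement.

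For the second part, if $W=C^\top X$ then $\Pi_C C^\top=C^\top-C^\top(CC^\top)^{-1}CC^\top=0$, so $W_{K'}=0$. The error dynamics~\eqref{eq:error} then lose the $\delta$ term, and $M_{12}$ reduces to $(I-EC)^\top\Gamma\Lambda$, which is independent of $W$. Under $CA=MC$, Corollary~\ref{cor:outlin} supplies $\mathcal{M}_1$ with $M_{11}=\He\{P(A-KC)\}+qI$.

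Take $K=0$ and $E=0$. If $A$ is Hurwitz, choose $P\succ0$ solving the Lyapunov equation $A^\top P+PA=-2qI$ for some $q>0$, so that $M_{11}=-qI$. Set $\Lambda=\varepsilon I$. By the Schur complement with $M_{22}=-2\varepsilon I\prec0$, we have $\mathcal{M}_1\preceq0$ iff
\[
-qI+\tfrac{\varepsilon}{2}\Gamma^2\preceq0 ,
\]
which holds for $\varepsilon\le 2q/\max_i\gamma_i^2$, or for any $\varepsilon>0$ if $\Gamma=0$. This matches the claim that the sector term is absorbed by a small $\Lambda$. Finally, the error dynamics become $\dot e=Ae$, as stated.
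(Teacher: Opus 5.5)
Your proposal is correct. For the identities, the projector bound and the feasibility part, it follows the paper's own argument: direct substitution, $\|\Pi_C\|\le 1$, then a Schur complement on $M_{22}=-2\Lambda\prec0$ with $\Lambda=\varepsilon I$ small. Your variations are harmless. Fixing $E=0$ is legitimate because feasibility only asks for some admissible $E$. Solving $A^\top P+PA=-2qI$ first gives you the explicit threshold $\varepsilon\le 2q/\max_i\gamma_i^2$. The paper instead keeps a general $E$, takes $\He\{PA\}=-Q$, and then shrinks $q$ and $\varepsilon$; the two routes are equivalent.

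The one place where you depart from the paper is also the one place where the paper is wrong, and your objection is justified. The paper's proof deduces ``equality iff $CW=0$'' from $\Pi_C$ being an orthogonal projector. That fact only yields $\|\Pi_C W\|\le\|W\|$ and the implication $CW=0\Rightarrow$ equality. For the induced $2$-norm fixed in the Notation paragraph, the converse fails. Take $n=2$, $C=(0\;\;1)$, $W=\diag(2,1)$: then $CW\neq0$, but $\Pi_C W=\diag(2,0)$, so $\|\Pi_C W\|=2=\|W\|$.

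Both of your repairs are sound. The Frobenius-norm version works because the Pythagorean identity gives strictness, and $C^\top(CC^\top)^{-1}$ is injective. The sharp spectral criterion is that $\|\Pi_C W\|=\|W\|$ exactly when $CWv=0$ for some unit $v$ with $\|Wv\|=\|W\|$. When $\sigma_{\max}(W)$ is repeated, read ``every top right singular vector'' as every unit vector of the top right singular subspace, not just a chosen basis of it.

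The matrix-level statement $W_{K'}=W\iff CW=0$ is true, and is probably what the authors meant.
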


\begin{proof}
  With $K' = C^\top(CC^\top)^{-1}$,
  $W_{K'} = (I - C^\top(CC^\top)^{-1}C)W = \Pi_C\,W$.
  Since $\Pi_C$ is an orthogonal projector, $\|\Pi_C\,W\| \leq \|W\|$,
  with equality iff $CW = 0$.
  When $W = C^\top X$, $W_{K'} = \Pi_C\,C^\top X = 0$, so
  $M_{12}$ reduces to $(I-EC)^\top\Gamma\Lambda$.
  With $K=0$ and $CA=MC$, the $(1,1)$ block is $\He\{PA\} + qI$.  Since $A$ is
  Hurwitz, $P \succ 0$ can be chosen to satisfy $\He\{PA\} = -Q \prec 0$
  for any $Q \succ 0$.  As $M_{22} = -2\Lambda \prec 0$, the Schur complement
  of the full matrix with respect to $M_{22}$ is
  \begin{equation*}
    \He\{PA\} + qI + \tfrac{1}{2}\,(I-EC)^\top\Gamma\Lambda\Gamma (I-EC) ,
  \end{equation*}
  which is negative definite for $q$ and $\Lambda = \varepsilon I$
  ($\varepsilon>0$) small enough.  Hence the LMI $\mathcal{M}_1$
  \eqref{eq:LMI_1} is feasible.
\end{proof}

\begin{remark}[Structure of the combined design]
  The classical LMI~\eqref{eq:LMI_std} forces a single gain $K$ to
  simultaneously stabilise the linear error dynamics and absorb the coupling
  $PW$, both through the same Lyapunov matrix $P$.  The combined design
  separates these tasks into three specialised parameters:
  \begin{enumerate}[(i)]
    \item $K'$ attenuates coupling geometrically,
          replacing $PW$ by $P\,\Pi_C\,W$ in $M_{12}$
          (Proposition~\ref{prop:general});
    \item $K$ stabilises the residual linear error dynamics with attenuated
          coupling;
    \item $L_j$ ($j=1,\ldots,n_y$) reconstruct the coupling components
          visible through the output, each governed by an existential
          condition: there exists a threshold $L_j^*$ such that exact
          reconstruction holds for all $L_j \geq
          L_j^*$~\cite{Levant2003,Moreno2008,bernard2017observers}.
  \end{enumerate}
\end{remark}

\begin{remark}[Trade-off in the choice of $K'$]
  While $K' = C^\top(CC^\top)^{-1}$ eliminates the coupling components in the
  row space of $C$, the combined LMI~\eqref{eq:LMI} also involves $K'$ in
  $M_{11}$ through $PK'CA$.  Choosing $K'$ solely to cancel the coupling
  may destabilise the linear part $A_{K'}$ if $A$ has unstable modes in the
  output directions.  In the case $A = -a I$, this
  trade-off disappears because $A$ and $K'CA$ commute and both contribute
  damping.  More generally, the LMI~\eqref{eq:LMI} jointly optimises over
  $(K,K',P,\Lambda)$ and automatically balances coupling attenuation against
  linear stability.  The closed-form $K'$ of Proposition~\ref{prop:general}
  serves as an admissible choice with a guaranteed coupling reduction, not
  necessarily as the LMI-optimal $K'$.
\end{remark}

\section{Stability under noise and model disturbances}
\label{sec:iss}

In practice the output $y$ is corrupted by measurement noise and the model may be subject to a bounded disturbance.  The following result quantifies how both perturbations propagate to the observer error, and how the combined observer's smaller effective gain reduces the noise floor relative to the classical design.

Under the perturbed system~\eqref{eq:system}, the per-channel evolution is
$\dot{z}_{1,j} = z_{2,j} + [CBu]_j + [Cp]_j$,
$y_j = z_{1,j} + \nu_j$.
Proposition~\ref{prop:sliding} applies; choosing $L_j$ sufficiently large such that the $\mathcal{KL}$
transient is below the steady-state floor for $t\geq T_0$,
the bound~\eqref{eq:ISS_homogeneous} reduces to the
component-wise estimate
\begin{equation}
  \label{eq:sta_error}
  \begin{aligned}
    \hat{z}_{2,j}(t) &= z_{2,j}(t) + \varepsilon_{2,j}(t), \\
    |\varepsilon_{2,j}(t)| &\leq c_0\,\bar{\nu}^{1/2} + c_1\,\bar{p}, \\
    \quad c_0 &= O(L_j),\; c_1 = O(1),
  \end{aligned}
\end{equation}
for $j = 1,\ldots,n_y$, where $c_0\bar{\nu}^{1/2}$ is the measurement noise contribution
and $c_1\bar{p}$ is the propagation of the disturbance $[Cp]_j$
through the observer~\cite{Levant2003,Moreno2008,bernard2017observers};
as noted in Proposition~\ref{prop:sliding}, the latter is not attenuated
by $L_j$.
The component-wise bounds imply
$\|\varepsilon_2\| \leq \sqrt{n_y}\,(c_0\bar{\nu}^{1/2} + c_1\bar{p})
:= \bar{\varepsilon}_2$.

\begin{proposition}[Stability under noise and model disturbances]
  \label{prop:iss}
  Suppose the LMI~\eqref{eq:LMI} is feasible with $E = 0$ and rate $q > 0$,
  and $\hat{z}_{2}$ satisfies~\eqref{eq:sta_error} for $t \geq T_0$.
  Define $\bar{\varepsilon}_2 = \sqrt{n_y}\,(c_0\bar{\nu}^{1/2} + c_1\bar{p})$.
  Then the error $e(t) = \hat{x}(t) - x(t)$ satisfies, for $t \geq T_0$,
  \begin{multline}
    \label{eq:iss_bound}
    \|e(t)\| \leq \kappa_P\Bigl(e^{-\rho(t-T_0)}\|e(T_0)\|
             + \\ \frac{1}{\rho}\bigl(\|K\|\bar{\nu}
               + \|K'\|\bar{\varepsilon}_2 + \bar{p}\bigr)\Bigr),
  \end{multline}
  where $\kappa_P = \sqrt{\lambda_{\max}(P)/\lambda_{\min}(P)}$ is the condition number of $P$
  and $\rho = q/(2\lambda_{\max}(P))$.
\end{proposition}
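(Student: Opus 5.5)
We start by writing the perturbed error dynamics for $t\ge T_0$ with $E=0$, so that $\hat\zeta=\hat x$ and $\delta=f(\hat x)-f(x)$. Substitute $y=Cx+\nu$ and $\hat z_2 = CAx+CWf(x)+\varepsilon_2$ into the lower block of \eqref{eq:observer}, and subtract the plant equation $\dot x = Ax+Wf(x)+Bu+p$. This gives
\begin{equation*}
  \dot e = (A_{K'}-KC)e + W_{K'}\delta + d(t),\qquad d(t) := K\nu(t) + K'\varepsilon_2(t) - p(t).
\end{equation*}
The plan is to treat $d$ as a bounded exogenous input. By \eqref{eq:sta_error} and the definition of $\bar\varepsilon_2$, its norm satisfies $\|d(t)\|\le \bar d := \|K\|\bar\nu+\|K'\|\bar\varepsilon_2+\bar p$ for $t\ge T_0$.

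Next we reuse the Lyapunov function $\mathcal V(e)=e^\top Pe$ from the proof of Theorem~\ref{thm:main}. Its derivative is now
\begin{equation*}
  \dot{\mathcal V} = e^\top\He\{P(A_{K'}-KC)\}e + 2e^\top PW_{K'}\delta + 2e^\top P d.
\end{equation*}
The sector condition \eqref{eq:sector} still applies at $\hat x$, because it does not involve $p$ or $\nu$. The same S-procedure identity with $\xi=(e^\top,\delta^\top)^\top$ and the LMI \eqref{eq:LMI} then gives $\dot{\mathcal V}\le -q\|e\|^2 + 2e^\top P d$.

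It is cleaner to work with the $P$-norm $v:=\sqrt{\mathcal V}=\|e\|_P$. We have $q\|e\|^2\ge \frac{q}{\lambda_{\max}(P)}v^2 = 2\rho v^2$. By Cauchy--Schwarz in the $P$ inner product, $2e^\top Pd \le 2v\,\|d\|_P\le 2v\sqrt{\lambda_{\max}(P)}\,\bar d$. Hence $\tfrac{d}{dt}v^2\le -2\rho v^2 + 2\sqrt{\lambda_{\max}(P)}\,\bar d\, v$, that is $\dot v\le -\rho v + \sqrt{\lambda_{\max}(P)}\,\bar d$ wherever $v>0$.

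The main technical obstacle is that $v$ is not differentiable at $e=0$. We handle it either by working with $\sqrt{\mathcal V+\eta}$ and letting $\eta\to0$, or by invoking the comparison lemma for upper Dini derivatives. Once the differential inequality holds, the comparison lemma gives
\begin{equation*}
  v(t)\le e^{-\rho(t-T_0)}v(T_0) + \tfrac{\sqrt{\lambda_{\max}(P)}}{\rho}\bar d\,(1-e^{-\rho(t-T_0)}) \le e^{-\rho(t-T_0)}v(T_0) + \tfrac{\sqrt{\lambda_{\max}(P)}}{\rho}\bar d.
\end{equation*}
Finally we convert back to the Euclidean norm using $\sqrt{\lambda_{\min}(P)}\|e\|\le v\le\sqrt{\lambda_{\max}(P)}\|e\|$. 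Dividing by $\sqrt{\lambda_{\min}(P)}$ yields $\|e(t)\|\le \kappa_P\bigl(e^{-\rho(t-T_0)}\|e(T_0)\| + \bar d/\rho\bigr)$, which is \eqref{eq:iss_bound}. Finiteness of $e(T_0)$ follows as in Remark~\ref{rem:transient}, with the now bounded inputs $\nu,p$ included.
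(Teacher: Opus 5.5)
Your proposal is correct and follows the paper's proof essentially step for step. It uses the same perturbed error dynamics, the same S-procedure bound $\dot{\mathcal V}\le -q\|e\|^2+2e^\top P d$, Cauchy--Schwarz in the $P$-norm, division by $\|e\|_P$ with a Dini-derivative (or regularisation) argument at $e=0$, the comparison lemma, and the conversion back to the Euclidean norm. The only cosmetic difference is that you lump the three perturbations into a single $d$ and bound $\|d\|_P\le\sqrt{\lambda_{\max}(P)}\,\bar d$ directly, whereas the paper bounds $\|P^{1/2}K\|$, $\|P^{1/2}K'\|$ and $\|P^{1/2}\|$ separately before applying the same estimate.
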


\begin{proof}
  For $t \geq T_0$, the measurement is $y_{\mathrm{meas}} = Cx + \nu$ with $\|\nu\| \leq \bar{\nu}$,
  and~\eqref{eq:sta_error} gives $\hat{z}_2 = z_2 + \varepsilon_2$ with
  $\|\varepsilon_2\| \leq \bar{\varepsilon}_2$.
  With $E = 0$, we have $\hat{\zeta} = \hat{x}$, hence $\delta = f(\hat{x}) - f(x)$.
  Substituting $y_{\mathrm{meas}}$ and $\hat{z}_2$ into the observer~\eqref{eq:observer},
  the process disturbance $p$ enters $\dot{e}$ as $-p$ (from the plant dynamics),
  and the reconstruction error $\varepsilon_2$ enters through the correction
  term $-K'(CA\hat{x}+CWf(\hat{x})-\hat{z}_2) = -K'(CAe + CW\delta - \varepsilon_2)$,
  contributing $+K'\varepsilon_2$.
  The full error dynamics are
  \begin{multline*}
    \dot{e} = (A_{K'} - KC)e \\
              + W_{K'}\delta + K\nu(t) + K'\varepsilon_2(t) - p(t).
  \end{multline*}
  Define the $P$-norm $\|e\|_P = \sqrt{e^\top P e}$.
  Differentiating $\|e\|_P^2 = e^\top P e$ along~$\dot{e}$,
  \begin{multline*}
    \frac{d}{dt}\|e\|_P^2
      = e^\top\He\{P(A_{K'} - KC)\}e + 2e^\top P W_{K'}\delta \\
        + 2e^\top P\bigl(K\nu + K'\varepsilon_2 - p\bigr).
  \end{multline*}
  The LMI~\eqref{eq:LMI} (with $E=0$) absorbs the $\delta$ terms exactly
  as in the proof of Theorem~\ref{thm:main}, giving
  $e^\top\He\{P(A_{K'} - KC)\}e + 2e^\top P W_{K'}\delta \leq -q\|e\|^2$.
  Hence
  \begin{equation}
    \frac{d}{dt}\|e\|_P^2
      \leq -q\|e\|^2 + 2\,e^\top P\bigl(K\nu + K'\varepsilon_2 - p\bigr).
  \end{equation}
  Writing $e^\top P K \nu = (P^{1/2}e)^\top P^{1/2}K \nu$ and applying
  Cauchy--Schwarz,
  \begin{multline*}
    e^\top P\bigl(K\nu + K'\varepsilon_2 - p\bigr)
      \leq \|e\|_P\bigl(\|P^{1/2}K\|\bar{\nu}
        + \\ \|P^{1/2}K'\|\bar{\varepsilon}_2
        + \|P^{1/2}\|\bar{p}\bigr).
  \end{multline*}
  Also $\|e\|_P^2 \leq \lambda_{\max}(P)\|e\|^2$, so $-q\|e\|^2 \leq -\frac{q}{\lambda_{\max}(P)}\|e\|_P^2$.
  Putting these together,
  \begin{multline*}
    \frac{d}{dt}\|e\|_P^2
      \leq -\frac{q}{\lambda_{\max}(P)}\|e\|_P^2 \\
        + 2\|e\|_P\bigl(\|P^{1/2}K\|\bar{\nu}
          + \|P^{1/2}K'\|\bar{\varepsilon}_2
          + \|P^{1/2}\|\bar{p}\bigr).
  \end{multline*}
  Dividing by $2\|e\|_P$, wherever $\|e\|_P>0$ and in the sense of the upper
  Dini derivative elsewhere,
  \begin{multline*}
    \dot{\|e\|}_P
      \leq -\underbrace{\frac{q}{2\lambda_{\max}(P)}}_{\textstyle\rho}\|e\|_P \\
        + \|P^{1/2}K\|\bar{\nu}
        + \|P^{1/2}K'\|\bar{\varepsilon}_2
        + \|P^{1/2}\|\bar{p}.
  \end{multline*}
  This is a linear differential inequality in $\|e\|_P$.
  By the comparison lemma~\cite{Khalil2002},
  \begin{multline*}
    \|e\|_P(t) \leq \|e\|_P(T_0)e^{-\rho(t-T_0)} \\
      + \frac{1}{\rho}\bigl(\|P^{1/2}K\|\bar{\nu}
        + \|P^{1/2}K'\|\bar{\varepsilon}_2
        + \|P^{1/2}\|\bar{p}\bigr).
  \end{multline*}
  Converting back to the Euclidean norm via
  $\sqrt{\lambda_{\min}(P)}\|e\| \leq \|e\|_P \leq \sqrt{\lambda_{\max}(P)}\|e\|$
  and bounding $\|P^{1/2}\| = \sqrt{\lambda_{\max}(P)}$,
  $\|P^{1/2}X\| \leq \sqrt{\lambda_{\max}(P)}\|X\|$ for each gain matrix
  gives $\kappa_P = \sqrt{\lambda_{\max}(P)/\lambda_{\min}(P)}$ in~\eqref{eq:iss_bound}.
\end{proof}

\begin{remark}[Noise floor comparison and crossover]
  At steady state, \eqref{eq:iss_bound} gives the exact bounds
  \begin{multline}
    \limsup_{t\to\infty}\|e(t)\| \leq \bar{e}_{\mathrm{comb}}
    := \frac{\kappa_P}{\rho}\Bigl(\|K_{\mathrm{comb}}\|\bar{\nu} \\
    + \|K'\|\,(c_0\sqrt{n_y}\,\bar{\nu}^{1/2}
    + c_1\sqrt{n_y}\,\bar{p}) + \bar{p}\Bigr)
  \end{multline}
  for the combined design, and
  $\limsup_{t\to\infty}\|e(t)\| \leq \bar{e}_{\mathrm{std}}
  := \frac{\kappa_P}{\rho}(\|K_{\mathrm{std}}\|\bar{\nu} + \bar{p})$
  for the standard design.
  The combined observer benefits from $\|K_{\mathrm{comb}}\| \ll \|K_{\mathrm{std}}\|$
  but pays a $\bar{\nu}^{1/2}$-scaling penalty through the reconstruction noise
  $c_0\sqrt{n_y}\,\bar{\nu}^{1/2}$, and an additional
  $\|K'\|c_1\sqrt{n_y}\,\bar{p}$ on the disturbance channel.
  For $\bar{p}=0$, the classical observer outperforms below a crossover noise
  level scaling as
  $\bar{\nu}_{\mathrm{cross}} \sim \bigl(\|K'\|\sqrt{n_y}\,c_0/\|K_{\mathrm{std}}\|\bigr)^2$.
  Since the constant hidden in $c_0 = O(L_j)$ is not identified, this
  predicts the dependence on $\|K'\|/\|K_{\mathrm{std}}\|$ rather than the crossover
  value itself; the crossover is located numerically in
  Section~\ref{sec:simulations}.
\end{remark}

\section{Numerical simulations}
\label{sec:simulations}

\subsection{Model and simulation setup}

We consider a generalized Wilson--Cowan model \cite{Wilson1972,wilson1973mathematical}
with $n_b = 3$ excitatory and $3$ inhibitory nodes ($n = 6$).
Grouping the excitatory states $x_e \in \R^{n_b}$ and inhibitory
states $x_i \in \R^{n_b}$ into $x = (x_e^\top, x_i^\top)^\top$, the
dynamics read
\begin{equation}
  \label{eq:wc}
  \begin{cases}
    \dot{x}_e = A_{ee} x_e + A_{ei} x_i + W_{ee} f(x_e) + W_{ei} f(x_i), \\[3pt]
    \dot{x}_i = A_{ie} x_e + A_{ii} x_i + W_{ie} f(x_e) + W_{ii} f(x_i),  \\
    y = Cx,
  \end{cases}
\end{equation}
with block connectivity matrix
\begin{equation}
  W = \begin{pmatrix} W_{ee} & W_{ei} \\ W_{ie} & W_{ii} \end{pmatrix},
  \qquad W_s = s\,W,\quad s > 0,
\end{equation}
where $s$ scales the overall coupling strength.
Unless stated otherwise, the measurement matrix $C \in \R^{n_y\times n}$
is such that each output channel is a linear combination of two state
components, one from the $x_e$ block and one from the $x_i$ block
($n_y = n_b = 3$).
All simulations use $n=6$, $n_y=3$, a baseline coupling
$\|W_0\| = 4.4$, and homogeneous observer gains $L_j = 3$ ($j=1,2,3$)
with $k_1 = 1.5$, $k_2 = 1.1$ in~\eqref{eq:homo_1}--\eqref{eq:homo_2}.
The plant and observers are integrated with RK4 at $\Delta t = 10^{-3}$\,s
for $10$\,s.  Zero-mean Gaussian noise of standard deviation $\sigma$
corrupts $y$; process noise of the same level is added to the plant.
For each design, the gains are obtained by maximising the certified rate $q$,
capped at $q_{\max}=10$, subject to the corresponding LMI in the variables
$(P, PK, PK', \Lambda)$ with $P \succeq \varepsilon_P I$ and
$\Lambda \succeq \varepsilon_\Lambda I$
($\varepsilon_P = \varepsilon_\Lambda = 10^{-4}$, and
$\varepsilon = 10^{-3}$ in~\eqref{eq:LMI_3});
the reported norms are those of $K = P^{-1}(PK)$ and $K' = P^{-1}(PK')$
at the optimum.
The root-mean-square (RMS) estimation error
\begin{equation*}
  \|e\|_{\mathrm{rms}}
  = \Bigl(\frac{1}{4}\int_{6}^{10}\|e(t)\|^2\,\mathrm{d}t\Bigr)^{1/2}
\end{equation*}
is computed over the last $40\%$ of each trajectory.

We present three cases of increasing complexity, each highlighting a
different facet of the combined design.

\subsection{Case 1: Direct output, no sliding-mode reconstruction}
\label{sec:case1}

We first illustrate the simplest scenario: Proposition~\ref{prop:direct},
where each output channel measures exactly one state component
($C = [I_3\;\;0_{3\times 3}]$).  Then $Cf(x) = f(Cx) = f(y)$ is directly
available from the output and the family of homogeneous observers is
unnecessary.  The observer~\eqref{eq:observer} reduces
to~\eqref{eq:observer_direct} with $E=0$,
\begin{equation}
  \dot{\hat{x}} = A\hat{x} + Wf(\hat{x}) + K(y - C\hat{x})
                  - K'\bigl(Cf(\hat{x}) - f(y)\bigr),
\end{equation}
with $A$ a general Hurwitz matrix ($\max\Re\lambda(A) = -0.77$).
The error dynamics are $\dot{e} = (A-KC)e + (W-K'C)\delta$, certified by the
blocks~\eqref{eq:LMI_2}.

To demonstrate that the coupling attenuation via $K'$ does not rely
on a slope bound, we use the H\"older nonlinearity
$f(\xi) = |\xi|^{1/2}\sign(\xi)/(1+|\xi|^{1/2})$, which is bounded,
strictly increasing, and satisfies $f'(0)=+\infty$, i.e.\ no global sector
bound exists.  The observer gains are computed with the increasing-only
LMI (no $\Gamma$) of Remark~\ref{rem:increasing}, whose off-diagonal block is
here $M_{12} = P(W-K'C) + \Lambda$.

Table~\ref{tab:case1} reports the gain norms.  The classical increasing-only
LMI becomes infeasible at $s\geq 10$, while the combined LMI remains
feasible with moderate gains across the full range.

\begin{table}[ht]
  \centering
  \caption{Case~1 --- Direct output, H\"older $f$, increasing-only LMI.
           Classical LMI infeasible for $s\geq 10$; combined LMI
           remains feasible.}
  \label{tab:case1}
  \begin{tabular}{rrrrr}
    \toprule
    $s$ & $\|W_s\|$ & $\|K_{\mathrm{std}}\|$ & $\|K_{\mathrm{comb}}\|$ & $\|K'\|$ \\
    \midrule
    1.0 &   4.4 & 1925  &  36.3 &  15.6 \\
    5.0 &  22.0 & 1943  &  37.5 &  32.1 \\
   10.0 &  44.0 & ---   &  43.3 &  56.0 \\
   15.0 &  66.0 & ---   &  52.7 &  80.6 \\
   50.0 & 220.0 & ---   & 176.5 & 268.7 \\
    \bottomrule
  \end{tabular}
\end{table}

Figure~\ref{fig:case1_noise} shows the noise sweep at $s=10$; the combined observer
tracks the state accurately despite the absence of any slope bound,
achieving $\mathrm{RMS} \approx 0.18$ at $\sigma = 1$.

\begin{figure}[ht]
  \centering
   \includegraphics[width=\columnwidth]{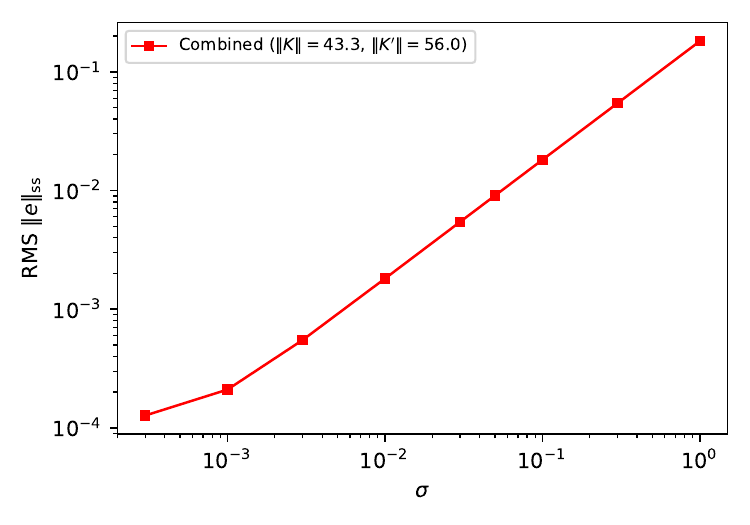}
  \caption{Case~1 --- Noise sweep at $s=10$ ($\|W\|=44.0$).
           No homogeneous observers; $Cf(x)=f(y)$ directly.  The classical LMI
           is infeasible; the combined observer converges.}
  \label{fig:case1_noise}
\end{figure}

\begin{figure}[ht]
  \centering
    \includegraphics[width=\columnwidth]{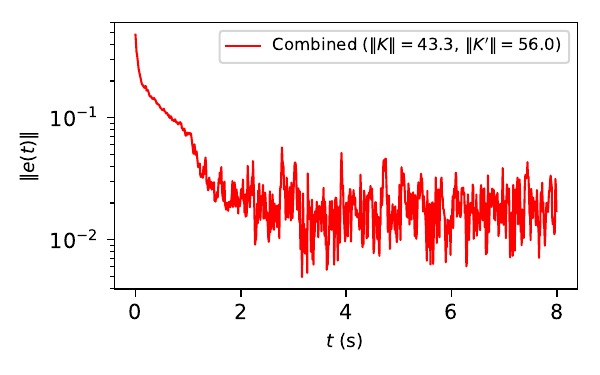}
  \caption{Case~1: Estimation error $\|e(t)\|$ at $s=10$,
           $\sigma=0.1$.  Classical LMI infeasible; combined observer
           converges.}
  \label{fig:case1_error}
\end{figure}

\subsection{Case 2: H\"older nonlinearity with reconstruction}
\label{sec:case2}

We now consider the LFP-type measurement matrix introduced above,
for which $Cf(x)$ is not directly available from $y$.
The family of homogeneous observers of
Proposition~\ref{prop:sliding} reconstructs $CWf(x)$ (the WC form with
$A=-a I$, Corollary~\ref{cor:outlin}).  The nonlinearity is the same H\"older
function as in Case~1, so no sector bound exists and only the
increasing-only LMI applies; Remark~\ref{rem:holder} covers the
boundedness required by Proposition~\ref{prop:sliding}.

Table~\ref{tab:case2} reports the gain norms.  The classical observer requires
gains that grow from $1763$ to $30701$, while the combined observer
maintains $\|K_{\mathrm{comb}}\|$ below $30$ and $\|K'\|$ stays constant at
$2.3$---the coupling attenuation is purely geometric.

\begin{table}[ht]
  \centering
  \caption{Case~2 --- H\"older $f$ + reconstruction of $CWf(x)$,
           increasing-only LMI.
           $\|K'\|$ constant; $\|K_{\mathrm{std}}\|$ explodes.}
  \label{tab:case2}
  \begin{tabular}{rrrrr}
    \toprule
    $s$ & $\|W_s\|$ & $\|K_{\mathrm{std}}\|$ & $\|K_{\mathrm{comb}}\|$ & $\|K'\|$ \\
    \midrule
    0.5 &   2.2 &  1763 &   6.3 & 2.3 \\
    1.0 &   4.4 &  1889 &   4.0 & 2.3 \\
    5.0 &  22.0 &  2178 &   8.2 & 2.3 \\
   10.0 &  44.0 &  3277 &  10.5 & 2.3 \\
   20.0 &  88.0 &  5368 &  21.5 & 2.3 \\
   50.0 & 220.0 & 30701 &  29.1 & 2.3 \\
    \bottomrule
  \end{tabular}
\end{table}

Figure~\ref{fig:case2_noise} shows the noise sweep at $s=10$.  At very low
noise ($\sigma \leq 3\cdot10^{-4}$), the reconstruction error
dominates and the classical observer has a slightly lower error floor.
Above $\sigma \approx 3\cdot10^{-3}$, the classical gain explosion
overtakes that penalty: the combined observer achieves a
$13\times$ lower RMS error at $\sigma=1$.
This ordering is the one predicted by Proposition~\ref{prop:iss}: although
the reconstruction term $\propto\sigma^{1/2}$ always dominates the linear
term $\propto\sigma$ for $\sigma<1$, its coefficient
$\|K'\|L_j\approx 6.9$ is smaller than
$\|K_{\mathrm{std}}\|=3277$ by a factor $475$, so the classical gain amplification
takes over at a noise level that scales as
$(\|K'\|L_j/\|K_{\mathrm{std}}\|)^2$.

\begin{figure}[ht]
  \centering
    \includegraphics[width=\columnwidth]{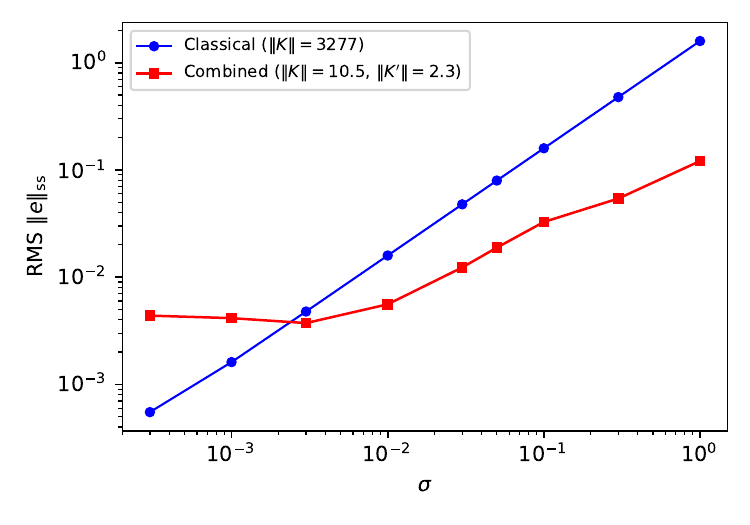}
  \caption{Case~2 --- Noise sweep at $s=10$ ($\|W\|=44.0$).
           The homogeneous observers reconstruct $CWf(x)$.
           Ratio reaches $13\times$ at $\sigma=1$.}
  \label{fig:case2_noise}
\end{figure}

\begin{figure}[ht]
  \centering
    \includegraphics[width=\columnwidth]{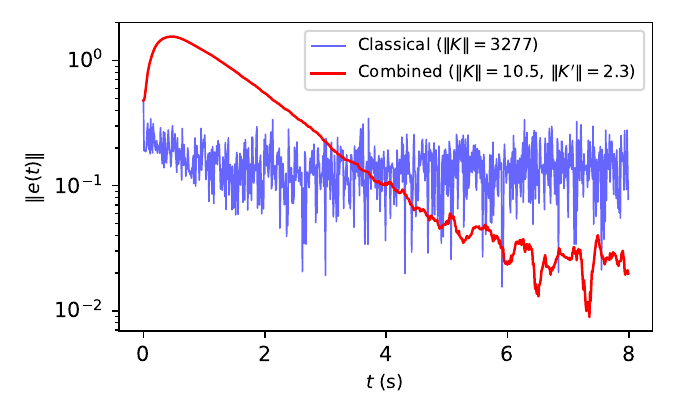}
  \caption{Case~2: Estimation error $\|e(t)\|$ at $s=10$,
           $\sigma=0.1$.  Combined observer ($\|K_{\mathrm{comb}}\|=10.5$)
           vs.\ classical ($\|K_{\mathrm{std}}\|=3277$).
           The reconstruction converges within $t\in[0,1]$,
           producing a transient overshoot; thereafter the combined
           observer settles at a lower error floor.}
  \label{fig:case2_error}
\end{figure}

\subsection{Case 3: Sigmoid with reconstruction}
\label{sec:case3}

We return to the original Wilson--Cowan configuration with the sigmoid
nonlinearity $f(\xi)=1/(1+e^{-\lambda_{\mathrm{sig}}\xi})$, $\lambda_{\mathrm{sig}}=4$,
and the LFP measurement matrix $C$.  The tight sector bound
$\Gamma = (\lambda_{\mathrm{sig}}/4)I_6 = I_6$ is used in the LMI.
The homogeneous observers reconstruct $CWf(x)$ as in Case~2.

Table~\ref{tab:case3} reports the gain norms.  Both designs attain the
maximum convergence rate $q=10.00$.  $\|K_{\mathrm{std}}\|$ grows from $19$
to $1271$, while $\|K_{\mathrm{comb}}\|$ grows from $5.7$ to $50$ and
$\|K'\|$ stays in $[2.3,\,3.1]$.

\begin{table}[ht]
  \centering
  \caption{Case~3 --- Lipschitz sigmoid + reconstruction of $CWf(x)$,
           sector LMI.
           $\|K'\|$ nearly constant; $\|K_{\mathrm{std}}\|$ grows $65\times$.}
  \label{tab:case3}
  \begin{tabular}{rrrrr}
    \toprule
    $s$ & $\|W_s\|$ & $\|K_{\mathrm{std}}\|$ & $\|K_{\mathrm{comb}}\|$ & $\|K'\|$ \\
    \midrule
    0.5 &   2.2 &    19 &   5.7 & 3.1 \\
    1.0 &   4.4 &    31 &   6.1 & 2.7 \\
    5.0 &  22.0 &    95 &  10.4 & 2.4 \\
   10.0 &  44.0 &   156 &  13.6 & 2.4 \\
   15.0 &  66.0 &   219 &  16.6 & 2.4 \\
   20.0 &  88.0 &   283 &  19.5 & 2.3 \\
   50.0 & 220.0 &   666 &  33.3 & 2.3 \\
  100.0 & 440.0 &  1271 &  49.9 & 2.3 \\
    \bottomrule
  \end{tabular}
\end{table}

Figure~\ref{fig:case3_noise} shows the noise sweep at $s=15$.
Below $\sigma\approx 2\cdot10^{-2}$ the reconstruction error
$\|K'\|\bar{\varepsilon}_2$ dominates and the classical observer has a
lower error floor.
Above this crossover, the $\|K_{\mathrm{std}}\|\sigma$ term takes over:
the $13\times$ smaller gain of the combined
observer translates into a $3.8\times$ lower RMS error at $\sigma=1$.
The ordering again follows Proposition~\ref{prop:iss}: while the
$\sigma^{1/2}$ reconstruction term dominates at low $\sigma$, its
coefficient $\|K'\|L_j\approx 6.9$ is very small compared to
$\|K_{\mathrm{std}}\|=219$ (Table~\ref{tab:case3}, $s=15$), so the classical
observer's linear noise amplification overtakes it.

\begin{figure}[ht]
  \centering
    \includegraphics[width=\columnwidth]{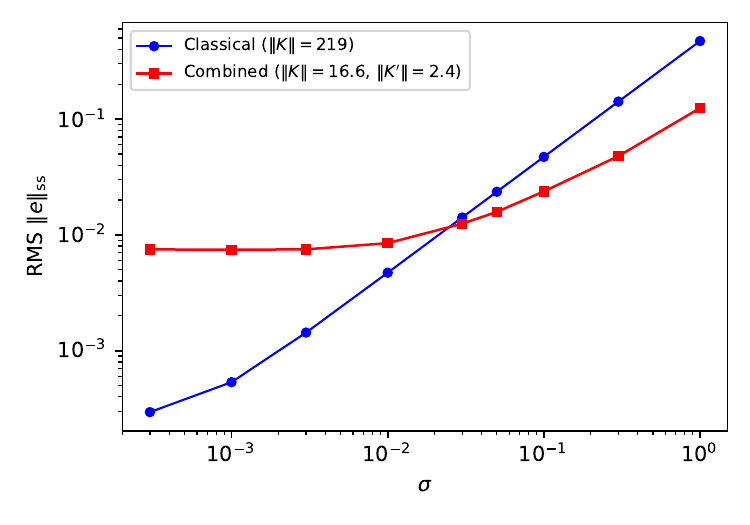}
  \caption{Case~3 --- Noise sweep at $s=15$ ($\|W\|=66.0$).
           Crossover near $\sigma\approx 2\cdot10^{-2}$;
           ratio $3.8\times$ at $\sigma=1$.}
  \label{fig:case3_noise}
\end{figure}

\begin{figure}[ht]
  \centering
    \includegraphics[width=\columnwidth]{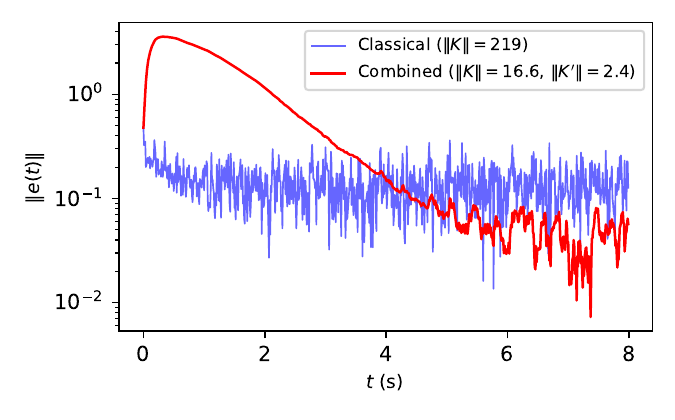}
  \caption{Case~3: Estimation error $\|e(t)\|$ at $s=15$,
           $\sigma=0.3$ (above the crossover).
           Combined observer ($\|K_{\mathrm{comb}}\|=16.6$) vs.\ classical
           ($\|K_{\mathrm{std}}\|=219$).
           As in Case~2, the convergence of the reconstruction
           produces a transient overshoot; the combined
           observer then settles at a lower steady-state error.}
  \label{fig:case3_error}
\end{figure}

\section{Conclusion}
\label{sec:conclusion}

We have proposed a combined observer for Lur'e-type systems with a general
sector-bounded nonlinearity that augments the standard linear output injection
with a nonlinear injection based on the finite-time reconstructed signal
$y_2 = CAx + CWf(x)$.  The key contributions are:
\begin{enumerate}[(i)]
  \item an LMI~\eqref{eq:LMI} containing the classical one as a special
        case that reshapes the coupling term via $K'$, keeping the
        observer gains moderate at prescribed convergence rates even
        when the classical design becomes gain-explosive
        (Theorem~\ref{thm:main}, Proposition~\ref{prop:general});
  \item an increasing-only LMI (Remark~\ref{rem:increasing}) that needs
        no sector slope bound and stays feasible for the combined
        observer when the classical LMI is infeasible;
  \item structural conditions under which the reconstruction simplifies:
        $CA = MC$ (Corollary~\ref{cor:outlin}) and
        direct availability of $Cf(x)$ from $y$
        (Proposition~\ref{prop:direct});
  \item a stability analysis (Proposition~\ref{prop:iss}) showing the
        linear-noise amplification scales with the moderate
        $\|K_{\mathrm{comb}}\|$ rather than the coupling-dependent
        $\|K_{\mathrm{std}}\|$, at the cost of an additional
        reconstruction error, and characterising the noise crossover
        below which the classical design is preferable;
  \item numerical confirmation on three scenarios of increasing
        complexity (Sections~\ref{sec:case1}--\ref{sec:case3}): the
        combined observer remains feasible where the classical
        increasing-only LMI becomes infeasible, and attains a $13\times$
        lower RMS error with sliding-mode reconstruction
        ($\|K'\| = 2.3$) and a $3.8\times$ lower error on the Lipschitz
        sigmoid model, where the noise crossover is quantified.
\end{enumerate}

\bibliographystyle{elsarticle-num}
\bibliography{refs}

\end{document}